\newtheorem{lem}{Lemma}
\newtheorem{thm}{Theorem}
\newtheorem{atheo}{Theorem}
\newtheorem{defin}{Definition}
\newtheorem{proposition}{Proposition}
\newtheorem{rem}{Remark}
\newtheorem{notation}{Notation}
\newcommand{\R}{\mathbb{R}}
\newcommand{\Z}{\mathbb{Z}}
\newcommand{\ho}{\hat{\Omega}}
\newcommand{\hx}{\hat{x}}
\newcommand{\hq}{\hat{q}}
\newcommand{\ds}{\displaystyle}
\begin{document}
\title{Front speed enhancement by incompressible flows in three or higher dimensions}
\author{Mohammad El Smaily$^{\hbox{\small 1}}$\thanks{M.E. is grateful to NSERC-Canada for providing support under the NSERC postdoctoral fellowship 6790-403487.} ,  St\'ephane Kirsch$^{\hbox{\small 2}}$\\
\footnotesize{$^{\hbox{1}}$ Department of Mathematics, University of Toronto,} \\
\footnotesize{ 40 St. George Street, Toronto, ON, M5S 2E4, CANADA} \\
\footnotesize{\texttt{elsmaily@math.toronto.edu}}\\
\footnotesize{$^{\hbox{2}}$ Lyc\'ee L.G Damas,}\\
\footnotesize{Chemin Vidal, 97354 R\'emire-Montjoly, French Guiana,} \\
\footnotesize{\texttt{stephane.kirsch@ac-guyane.fr}}}

\date{July 4, 2013}
\maketitle
\begin{abstract} We study, in dimensions $N\geq 3$, the family of first integrals of an incompressible flow: these are $H^{1}_{loc}$ functions whose level surfaces are tangent to the streamlines of the advective incompressible field. One main motivation for this study comes from earlier results proving that  the existence of nontrivial first integrals of an incompressible flow $q$ is the main key that leads to a ``linear speed up'' by a large advection of pulsating traveling fronts solving a reaction-advection-diffusion equation in a periodic heterogeneous framework. The family of first integrals is not well understood in dimensions $N\geq3$ due to the randomness of the trajectories of $q$ and this is in contrast with the case $N=2$.  By looking at the domain of propagation as a union of different components produced by the advective field, we provide more information about first integrals and we give a class of incompressible flows which exhibit `ergodic components' of positive Lebesgue measure (hence are not shear flows) and which, under certain sharp geometric conditions, speed up the KPP fronts linearly with respect to the large amplitude. In the proofs, we establish a link between incompressibility, ergodicity, first integrals, and the dimension to give a sharp condition about the asymptotic behavior of the minimal KPP speed in terms the configuration of ergodic components.\end{abstract}
\textbf{AMS subject classifications.}	35K57, 35Q35, 37A05, 37A25, 92D25.
\vskip 0.5 cm
\textbf{Keywords.} KPP front speed enhancement, propagation in a flow, reaction-advection-diffusion, ergodic components and incompressibility.

\section{Introduction} 

The main objective of this paper is to understand the influence of a strong incompressible flow on KPP reaction-diffusion  in the case where the spatial dimension is $N\geq 3.$ Consider a reaction-advection-diffusion equation of the form $$ u_t =\nabla\cdot(A(z)\nabla u)+ Mq(z)\cdot\nabla u+f(z,u),\; t\in\,\mathbb{R},\;z\in\,\Omega,$$ with boundary conditions 
        $\nu \cdot A\nabla u =0\hbox{ on }\mathbb{R}\times\partial\Omega \text { when }\partial \Omega\neq\emptyset,$
with ``standard'' assumptions on the \emph{unbounded, periodic} domain $\Omega\subseteq\R^{N}$ and the diffusion $A=A(z)$,  (see Section \ref{hetframework} for precise assumptions) and a ``KPP type'' nonlinearity (a classical example is $f(u)=u(1-u)$ for $u\in [0,1]$). This models population dynamics in a heterogeneous framework, where $u$ stands for the density of certain population at time $t$ and position $z$. The question about the influence of advection, stirring for instance, on this population dynamics is natural and has been under  investigation in many works in mathematics and physics which we will discuss in details in Subsection \ref{the-problem}.  The answer to this kind of questions in higher dimensions ($N\geq 3$) is important because there are interesting phenomena which can be described by such reaction-advection-diffusion models in the case $N=3$ particularly. In the case $N=2$, the streamlines of the incompressible flow have less freedom and this makes it relatively simpler to give a sharp criterion classifying the flows according to the rate of ``speeding-up'' the propagation phenomenon of ``traveling fronts'' induced by reaction-diffusion models.

Before discussing the heterogeneous setting, which involves a strong incompressible flow, let us first recall the notion of traveling fronts in homogeneous media and review some of their important features in the case of the so called ``KPP'' nonlinearity. Traveling fronts appeared in the pioneering work \cite{KPP} of Kolomogrov, Petrovsky and Piskunov which addressed a \emph{homogeneous} reaction-diffusion equation satisfied by a scalar quantity $u=u(t,x)$
 \begin{equation}\label{homogeneous}
 u_{t}=\Delta u+ f(u) \hbox{ for all }(t,x)\in\R\times\R^{N},
 \end{equation}
where the Lipschitz nonlinearity $f$ (often called  `KPP nonlinearity' in the literature, due to the authors of \cite{KPP}) satisfies 
\begin{equation}\label{KPPhom}
\begin{array}{ll}
f(0)=f(1)=0,~f'(0)>0, ~ f>0\hbox{ in }(0,1),\vspace{5pt}\\
0<f(s)\leq f'(0)s \hbox{ for all }s\in (0,1).
\end{array}
\end{equation}
Given a unitary direction $e\in\R^{N}$, a \emph{traveling front} in the direction of $e$ is a time-global solution to (\ref{homogeneous}) of the form $u(t,x)=\phi(x\cdot e-ct)$ 
where the \emph{profile} $\phi$ satisfies the boundary conditions $\phi(-\infty)=1$ and $\phi(+\infty)=0$. The real number $c$ is called the \emph{speed} of the front. It is well known that equation (\ref{homogeneous}) with a nonlinearity of type (\ref{KPPhom}) admits  traveling front solutions $(c,u)$, connecting the stationary states 1 and 0 (the trivial solutions of (\ref{homogeneous})) if and only if $c\geq c^{*}(e)$. The threshold $c^{*}=c^{*}(e)$ is called the \emph{KPP minimal speed}. In this simple homogeneous setting where the coefficients of the equation are independent of time and space variables and the domain is the whole space $\R^{N}$ without perforations, the value of $c^{*}$ is given by $2\sqrt{f'(0)}$ and it does not depend on the direction of propagation $e$ (see \cite{Xinreview} and the references therein). 

The interest in studying traveling front solutions and their speeds of propagation increased in the 70's due to their appearance in interface dynamics in many phenomena in chemistry and biology as well as combustion theory. For instance, the works \cite{Aronsonwein1,Aronsonwein2}, Aronson and Weinberger proved the existence of traveling wave solutions for  homogeneous reaction-diffusion equations  which were proved to model population genetics, combustion, and nerve pulse propagation. 

We want to emphasize that the minimal speed in the KPP case is the one  of special interest among all the speeds lying in the spectrum $[c^{*}(e),+\infty)$. This comes from the results of \cite{Aronsonwein2, BHNper, weinberger2002} which proved  that, in certain situations, the KPP minimal speed $c^{*}(e)$ is actually the \emph{speed of spreading} of solutions of the Cauchy problem (\ref{homogeneous}) with general compactly supported initial data.

\subsection{The heterogeneous framework}\label{hetframework}
Unlike the homogeneous setting described above, the one we work with is more complicated in the sense that the coefficients of the equation
 depend on the spacial variables and \emph{there is a drift term in the equation} as well.  The model reflects more of the realty when it  takes into account the influence of the non-homogeneous environment and medium  on the propagation phenomena. The drift term is understood in some scenarios as the representative of stirring and the main focus will be on large drifts in this work.
 A rich series of works in the last two decades discussed reaction-advection-diffusion equations in a `\emph{heterogeneous}' framework. The minimal KPP speed still exists in those settings but  it is described via elliptic eigenvalue problems related to the linearized  reaction-advection-diffusion equation near the stationary state $0.$ In this subsection, we describe the mathematical framework which we consider for this paper and we recall some important known results which are related to our analysis. The term ``reaction-advection-diffusion'' equation stands  for a model of the form
\begin{equation}\label{heteq}
 \left\{
      \begin{array}{l}
 u_t =\nabla\cdot(A(z)\nabla u)\;+\,q(z)\cdot\nabla u+f(z,u),\; t\in\,\mathbb{R},\;z\in\,\Omega,
\vspace{3pt} \\
        \nu \cdot A\nabla u =0\;\hbox{ on }\mathbb{R}\times\partial\Omega,
      \end{array}
    \right.
\end{equation}
where $\nu$ stands for the unit outward normal on $\partial\Omega$ whenever it is nonempty. In this work, we are interested in the case of large advection. That is, a parametric equation of type (\ref{heteq}) where $q\cdot\nabla u$ is replaced by $M q\cdot \nabla u$ and $M$ is a large parameter:  
\begin{equation}\label{heteqM}
 \left\{
      \begin{array}{l}
 u_t =\nabla\cdot(A(z)\nabla u)\;+\,Mq(z)\cdot\nabla u+f(z,u),\; t\in\,\mathbb{R},\;z\in\,\Omega,
\vspace{3pt} \\
        \nu \cdot A\nabla u =0\;\hbox{ on }\mathbb{R}\times\partial\Omega.
      \end{array}
    \right.
\end{equation}

In general, the domain $\Omega$ is a $C^3$ nonempty connected open subset of $\mathbb{R}^N$ such that for some integer $1\leq d\leq N,$ and for some $L_1,\cdots,L_d$ positive real numbers, we have
\begin{eqnarray}\label{comega}
    \left\{
      \begin{array}{l}
        \exists\,R\geq0\,;\forall\,(x,y)\,\in\,\Omega\subseteq\R^{d}\times\R^{N-d},\,|y|\,\leq\,R, \\
        \forall\,(k_1,\cdots,k_d)\in\,L_1\mathbb{Z}\times\cdots\,\times L_d\mathbb{Z},
        \quad\displaystyle{\Omega\;=\;\Omega+\sum^{d}_{k=1}k_ie_i},
      \end{array}
    \right.
\end{eqnarray}
where $\{e_{1}, \cdots , e_{N}\}$ stands for the standard basis of $\R^{N} $ and $d\in\{1,\cdots, N\}$. In other words, the domain $\Omega$ is $L_{i}$ periodic in the $i^{\text {th}}$ direction ($1\leq i\leq d$) and one can write $\Omega+L_{i}e_{i}=\Omega$ for all $1\leq i\leq d$. We also assume, when $d<N$, that $\Omega$ is bounded in the directions $e_i$ for $i >d$. We notice that in the case $d=N$, the domain $\Omega$ is unbounded and periodic in \emph{all directions}.  We denote the periodicity cell of $\Omega$ by
\begin{equation} \label{periodicityCell}  
C = \{z=(x_1,\cdots,x_d, y) \in \Omega \text{ such that }x_i \in [0,L_i] \text{ for all }1\leq i\leq d\}.
\end{equation}


\begin{defin}[$L$-periodic fields]
On a  domain $\Omega$ which satisfies the periodicity described in  (\ref{comega}), we say that a function $g:\Omega\rightarrow \R^{m}$ ($m=1,2,\cdots$) is $L-$periodic if $g(x+k)=g(x)$ for all $x\in \Omega$ and for all $k\in L_{1}\Z\times \cdots\times L_{d}\Z\times\{0\}^{N-d}$.
\end{defin}

\noindent The presence of the term $M q\cdot \nabla u$ in equation (\ref{heteqM}), where $M$ is seen as a large parameter,  will be  the main focus of our analysis.  In any dimension $N$, the advective field $q=q(x,y)=(q_{1},\cdots,q_{N})$ is a
$C^{1,\delta}(\overline{\Omega})$ (with $\delta>0$) vector field
satisfying
\begin{equation} \label{cq}
\left\{
\begin{array}{ll}
q=q(x,y)\hbox{ is $L$- periodic in $x$}, &\vspace{3 pt}\\
  \nabla\cdot q=0 \hbox{ in } \overline{\Omega},  &\vspace{3 pt}\\
 q\cdot\nu=0\hbox{ on }\partial\Omega\hbox{ (when $\partial\Omega\neq\emptyset$)}, &\vspace{3 pt} 
 \end{array}
  \right.
\end{equation}
together with the normalization condition 
\begin{equation}\label{cq21}
\forall \, 1\leq i\leq d, ~\displaystyle{\int_{C}q_i ~dx =0}.\footnote{Proposition \ref{zeroaverageproperty} shows that this condition is equivalent to assuming that all components $\ds{\{q_{i}\}_{1\leq i\leq N}}$ are of zero average over $C$.} 
\end{equation}
\begin{rem}\label{remarkonzeroaverage}The assumption (\ref{cq21})  on the vector field $q$ states that only the first $d$ components are of zero average over the periodicity cell $C$. The condition appeared in this form in \cite{BHNper, BHNadvec} and many other works. In fact, we will prove in Proposition \ref{zeroaverageproperty} that, due to the incompressibility of $q$, this assumption is \emph{equivalent to having all components of $q$ of zero average}. That is, 
\begin{equation}\label{cq2}
\forall \, 1\leq i\leq N, ~\displaystyle{\int_{C}q_i ~dx =0}
\end{equation}
\end{rem}

\subsection{A brief review of relevant results}\label{review}
We recall here the definition of pulsating traveling fronts and summarize the known results, which are related to our current work, regarding the existence of these fronts and their speeds in the KPP heterogeneous and periodic setting. We fix a unit direction $e\in\R^{d}$, $|e|=1$, and let $\tilde{e}:=(e,0,\cdots,0)\in\R^{N}.$ A pulsating traveling front in the direction of $e$, with a speed $c$, is a classical time-global solution $u=u(t,x,y)$ of (\ref{heteq}) which has the form 
$u(t,x,y)=\phi(x\cdot e-ct,x,y)$ for all $(x,y)\in\Omega,$  such that the `profile' $\phi$ is $L$-periodic in $x$ and connects the two stationary states of (\ref{heteq}) as follows $$\lim_{s\rightarrow-\infty}\phi(s,x,y)=1\hbox{ and }\lim_{s\rightarrow+\infty}\phi(s,x,y)=0 \text{ uniformly in } (x,y)\in\overline{\Omega}.$$
The limiting condition and the periodicity in $x$ of the profile  $\phi$ actually come from the traveling front Ansatz:  A solution $u$ to (\ref{heteq}) which satisfies 
\begin{eqnarray}
\left\{\begin{array}{l}
\displaystyle{u\left(t-\frac{k\cdot e}{c},x,y\right)=u(t,x+k,y)}  \hbox{,}\vspace{3 pt} \\
         \displaystyle{\lim_{x\cdot e \rightarrow-\infty}u(t,x,y)=1\hbox{ and } \lim_{x\cdot e\rightarrow \,+\infty} u(t,x,y)=0}  \hbox{,}
         \vspace{3 pt}\\
          0\,\leq\,u\leq\,1,
      \end{array}
    \right.
\end{eqnarray}
 where the above limits hold \emph{locally} in $t$ and \emph{uniformly} in $y$ and in the directions of $\mathbb{R}^d$ which are orthogonal to $e$ .
 
 Concerning the nonlinearity $f$ in equation (\ref{heteq}), our results will hold in the case of generalized heterogeneous  KPP type nonlinearity (not only for those of homogeneous type (\ref{KPPhom}).) In order to announce our results in the most general setting, we present these assumptions here.  The reaction term in (\ref{heteq}) is a nonnegative
function $f=f(x,y,u)$ defined in $\overline{\Omega}\,\times[0,1]$ such that
\begin{eqnarray}\label{cf1}
    \left\{
      \begin{array}{ll}
        f\geq0, f\;\hbox{ is $L$-periodic with respect to }\; x, \hbox{ and of class }C^{1,\delta}(\overline{\Omega}\times[0,1]),\vspace{3 pt}\\
        \forall\,(x,y)\in\,\overline{\Omega},\quad \displaystyle{f(x,y,0)=f(x,y,1)=0 } \hbox{,} \vspace{3 pt}\\
        \exists \,\rho\in(0,1),\;\forall(x,y)\,\in\overline{\Omega},\;\displaystyle{\forall\, 1-\rho\leq s \leq
s'\leq1,}\;
\displaystyle{f(x,y,s)\;\geq\,f(x,y,s')} \hbox{,} \vspace{3 pt}\\
        \forall\,s\in(0,1),\; \exists \,(x,y)\in\overline{\Omega}\;\hbox{ such that }\;f(x,y,s)>0  \hbox{,} \\
        \forall\,(x,y)\in\overline{\Omega},\quad \zeta(x,y):=\displaystyle{f_{u}(x,y,0)=\lim_{u\rightarrow\,0^+}\frac{f(x,y,u)}{u}>0}  \hbox{,}
      \end{array}
    \right.
\end{eqnarray}
 together with the  ``KPP'' condition (named after Kolmogorov, Petrovsky and  Piskunov \cite{KPP})
 \begin{equation}\label{cf2}
 \forall\, (x,y,s)\in\overline{\Omega}\times(0,1),~0<f(x,y,s)\leq f_u(x,y,0)\times\,s.
\end{equation}
A typical example of $f$ is $(x,y,u)\mapsto u(1-u)h(x,y)$ defined on $\overline{\Omega}\times[0,1]$ where $h$ is a positive $C^{1,\delta}(\,\overline{\Omega}\,)$ $L$-periodic function. 

\noindent Our results apply in the case of a spatially dependent diffusion  $A(x,y)=(A_{ij}(x,y))_{1\leq i,j\leq
 N}$ which is \textit{symmetric}, $C^{2,\delta}(\,\overline{\Omega}\,)$ (for some $\delta
>0$) and  satisfies the classical assumptions
\begin{eqnarray}\label{cA}
    \left\{
      \begin{array}{l}
        A\; \hbox{is $L$-periodic with respect to}\;x, \vspace{3 pt}\\
        \exists\,0<\alpha_1\leq\alpha_2,\forall(x,y)\;\in\;\Omega,\forall\,\xi\,\in\,\mathbb{R}^N,\vspace{3 pt}\\
       \displaystyle{ \alpha_1|\xi|^2 \;\leq\;\sum_{1\leq i,j\leq N}\,A_{ij}(x,y)\xi_i\xi_j\leq\alpha_2|\xi|^2.}
      \end{array}
    \right.
\end{eqnarray}
 When
$A$ is the identity matrix, this boundary condition in (\ref{heteq}) reduces to
the usual Neumann condition $\partial_{\nu}u=0.$

 Let us recall the well known existence result of KPP pulsating traveling fronts and  the threshold $c^{*}$ which,  from this point on, we denote by $c^{*}_{\Omega, A, q,f}(e)$ for the minimal KPP speed of (\ref{heteq}) in the heterogeneous setting. 

\begin{atheo}[Berestycki, Hamel, and Nadirashvili
\cite{BHNper}]\label{varthm}
 Let $e$ be a fixed unit vector in
$\mathbb{R}^d.$ Let $\tilde{e}=(e,0,\ldots,0)$ $\in\mathbb{R}^N.$
Assume that $\Omega,$ $q$, $f$ and $A$ satisfy (\ref{comega}), 
(\ref{cq}-\ref{cq21}),  (\ref{cf1}-\ref{cf2}) and (\ref{cA}). The minimal speed
$c^{*}(e):=c^{*}_{\Omega,A,q,f}(e)$ of pulsating fronts solving
(\ref{heteq}) and propagating in the direction of $e$ is given by
\begin{equation}\label{var}
    \displaystyle{c^*(e):=c^{*}_{\Omega,A,q,f}(e)\,=\,\min_{\lambda>0}\frac{k(\lambda)}{\lambda}},
\end{equation}
where $\displaystyle{k(\lambda)=k_{\Omega,e,A,q,\zeta}(\lambda)}$ is
the principal eigenvalue of the operator
$\displaystyle{L_{\Omega,e,A,q,\zeta,\lambda}}$ which is defined by
\begin{eqnarray}\label{Leq}
\begin{array}{ll}
\displaystyle{L_{\Omega,e,A,q,\zeta,\lambda}\psi\,:=}&\displaystyle{\,\nabla\cdot(A\nabla\psi)\,-2\lambda\tilde{e}\cdot
A\nabla\psi\,+q\cdot\nabla\psi\,}\vspace{3 pt}\\
&\displaystyle{+[\lambda^2\tilde{e}A\tilde{e}-\lambda\nabla\cdot(A\tilde{e})-\lambda
q\cdot\tilde{e}+\,\zeta]\psi}
\end{array}
\end{eqnarray}
acting on the set of functions
$$
\begin{array}{l}
E=\left\{\psi=\psi(x,y)\in C^2(\overline{\Omega}), \psi\hbox{ is }
L-\hbox{periodic in }x, ~
\nu\cdot A\nabla\psi=\lambda(\nu\cdot A
\tilde{e})\psi\hbox{ on }\partial{\Omega}\right\}.
\end{array}
$$
\end{atheo}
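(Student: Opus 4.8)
The plan is to read the variational formula off the linearization of (\ref{heteq}) at the unstable state $u\equiv 0$ and to match it against a two-sided sub-/super-solution scheme in the spirit of Kolmogorov--Petrovsky--Piskunov, carried out on the periodic slab $\Omega$ with the conormal boundary condition of (\ref{heteq}). First I would fix the unit vector $e$ and, for each $\lambda>0$, study the operator $L_{\Omega,e,A,q,\zeta,\lambda}$ acting on $L$-periodic functions subject to the Robin condition defining $E$. Since this is a second order elliptic operator on the compact cell $C$ with periodic identifications, the Krein--Rutman theorem yields a simple principal eigenvalue $k(\lambda)$ together with a positive $L$-periodic eigenfunction $\psi_\lambda\in C^{2}(\bo)$. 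A direct computation (using the symmetry of $A$, $\nabla\cdot q=0$, and $q\cdot\nu=0$) shows that $u(t,x,y)=e^{-\lambda(x\cdot e-ct)}\psi_\lambda(x,y)$ solves the \emph{linearized} equation $u_t=\nabla\cdot(A\nabla u)+q\cdot\nabla u+\zeta u$ with $\nu\cdot A\nabla u=0$ on $\partial\Omega$ precisely when $c\lambda=k(\lambda)$; this is the exponential profile a pulsating front must approach as $s:=x\cdot e-ct\to+\infty$, and it already suggests the dispersion relation $c=k(\lambda)/\lambda$. Perturbation theory for simple eigenvalues makes $\lambda\mapsto k(\lambda)$ analytic and, via a standard argument, convex; because $\zeta>0$ on $\bo$ one has $k(0)\ge\min_{\bo}\zeta>0$, so $k(\lambda)/\lambda\to+\infty$ as $\lambda\to0^{+}$, while the coercive zeroth-order term $\lambda^{2}\,\tilde e A\tilde e\ge\alpha_1\lambda^{2}$ forces $k(\lambda)/\lambda\to+\infty$ as $\lambda\to+\infty$. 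Hence $\mu(e):=\min_{\lambda>0}k(\lambda)/\lambda$ is attained at some $\lambda^{*}>0$, and the goal reduces to proving $c^{*}_{\Omega,A,q,f}(e)=\mu(e)$.

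\textbf{Step 2 ($c^{*}(e)\le\mu(e)$: fronts for every $c\ge\mu(e)$).} Fix $c\ge\mu(e)$ and pick $\lambda>0$ with $k(\lambda)=c\lambda$ (the smaller root of $k(\mu)=c\mu$). By the KPP bound (\ref{cf2}), $f(x,y,u)\le\zeta(x,y)u$, so $\overline u:=\min\bigl(1,\,e^{-\lambda(x\cdot e-ct)}\psi_\lambda(x,y)\bigr)$ is a generalized super-solution of (\ref{heteq}) in pulsating form. For a sub-solution I would take, for $\lambda'\in(\lambda,\lambda(1+\delta))$ (which also lies below the larger root when $c>\mu(e)$) and $B$ large, $\underline u:=\max\bigl(0,\,e^{-\lambda(x\cdot e-ct)}\psi_\lambda-B\,e^{-\lambda'(x\cdot e-ct)}\psi_{\lambda'}\bigr)$, exploiting the local estimate $\zeta(x,y)u-f(x,y,u)\le C\,u^{1+\delta}$ near $u=0$ (from $f\in C^{1,\delta}$) and the strict gap $k(\lambda')<c\lambda'$; for $B$ large, $\underline u$ is positive only where $s$ is large, where the exponential smallness makes the differential inequality hold. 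A monotone iteration, or a Leray--Schauder degree argument, between $\underline u$ and $\overline u$ on truncated slabs, followed by a diagonal extraction and the parabolic estimates of Step 4, produces a pulsating front of speed $c$; the borderline case $c=\mu(e)$ is then recovered by passing to the limit $c\downarrow\mu(e)$ along the fronts already constructed, using a uniform-in-$c$ lower bound to prevent collapse to $0$.

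\textbf{Step 3 ($c^{*}(e)\ge\mu(e)$: no front below $\mu(e)$).} Suppose $u=\phi(x\cdot e-ct,x,y)$ is a pulsating front with speed $c<\mu(e)$. Then $k(\lambda)>c\lambda$ for \emph{every} $\lambda>0$, so each $e^{-\lambda(x\cdot e-ct)}\psi_\lambda$ is a strict super-solution of the linearized --- hence, by the KPP bound, of the full --- equation. Using the interior parabolic and boundary Schauder estimates together with the Harnack inequality on the periodic slab to control $\phi$ and its derivatives and to pin down the exponential decay rate of $\phi$ as $s\to+\infty$, one slides the super-solution $\tau\,e^{-\lambda(x\cdot e-ct)}\psi_\lambda$ (for $\lambda$ chosen near that decay rate) down onto $\phi$: the sliding cannot stop at a positive $\tau$, since the comparison would be strict there, which forces $\phi\equiv0$ and contradicts $\phi(-\infty,x,y)=1$. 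Hence $c\ge\mu(e)$, and with Step 2 this gives $c^{*}_{\Omega,A,q,f}(e)=\mu(e)=\min_{\lambda>0}k(\lambda)/\lambda$.

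\textbf{Step 4 and the main obstacle.} Underpinning all of the above are the a priori theory for (\ref{heteq}) on the \emph{unbounded} periodic slab --- interior and up-to-the-boundary Schauder and $L^{p}$ bounds, Harnack --- the closedness of the set of admissible speeds, and the monotonicity of front profiles in $s$; these are classical but must be set up with care because $\Omega$ is unbounded. The genuine difficulty is \textbf{Step 2 at the critical speed}: at the minimizer $\lambda^{*}$ the convex function $\lambda\mapsto k(\lambda)-c\lambda$ vanishes to second order, so the naive exponential sub-solution degenerates, and one must either correct the profile with an algebraic factor or obtain the critical front as a limit of supercritical ones while ruling out that it flattens to $0$ --- this is exactly where the spatial heterogeneity, i.e.\ the absence of an explicit eigenfunction $\psi_{\lambda}$, makes the construction delicate. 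A secondary subtlety, needed in Step 3, is proving that any front decays \emph{exactly} exponentially at $+\infty$ with a well-defined rate, which rests on a Phragm\'en--Lindel\"of/Harnack argument along the unbounded periodic direction.
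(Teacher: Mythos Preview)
The paper does not prove this statement at all. Theorem~A is labeled with the \texttt{atheo} environment precisely because it is a result quoted from the literature --- it is attributed in full to Berestycki, Hamel, and Nadirashvili \cite{BHNper} and appears in Section~\ref{review}, ``A brief review of relevant results,'' purely as background needed to state the paper's own contributions (Theorems~\ref{main}--\ref{ngeq5}). There is no proof of Theorem~A anywhere in the paper, so there is nothing to compare your proposal against.

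For what it is worth, your sketch is a faithful outline of the strategy actually used in \cite{BHNper}: Krein--Rutman for the principal eigenvalue $k(\lambda)$, convexity and the asymptotics of $k(\lambda)/\lambda$, exponential super-solutions from the KPP condition~(\ref{cf2}), corrected exponential sub-solutions, and a sliding/comparison argument for the lower bound. The obstacles you flag (the degenerate sub-solution at the critical speed, the exact exponential decay of fronts) are also the genuine technical points in the original proof. But none of this is carried out in the present paper --- it simply invokes formula~(\ref{var}) as a known tool.
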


\begin{notation} \emph{We will use the notation $c^{*}_{\Omega,A, Mq,f}(e)$ for the minimal KPP speed, in the direction of $e$, of the parametric problem (\ref{heteqM}) with respect to the amplitude $M$ of the advection $q$.}
\end{notation}

\subsection{The question about the influence of an advective term $M q\cdot \nabla u$}\label{the-problem}
Our main motivation for this work comes from the very rich mathematical literature addressing the influence of a large incompressible flow on the propagation of fronts. As we mentioned above, the minimal speed is of special interest among the spectrum of speeds $[c^{*},\infty)$ in the KPP case, due to its relation to spreading of general compactly supported initial data for the Cauchy problem associated with the reaction-diffusion equation.

The presence of a large advection in  (\ref{heteqM}) is expected to speed-up the front propagation. This has been widely considered as a subject of study in many mathematical articles in the past 15 years. We firstly mention  the case of a diffusive mixing (where there is no reaction term),  
\begin{equation}\label{constantinezlatoskiselev}
u_{t}^{M}(x,t)+Mq\cdot\nabla u^{M}(x,t)-\Delta u^{M}(x,t)=0, ~~u^{M}(x,0)=u_{0}(x),
\end{equation}
 studied by  Constantin, Kiselev, Ryzhik and Zlato\v{s} \cite{ZlatosConstantineKiselevRyzhik}, who gave sharp criteria on the  incompressible flow $q$ to be ``\emph{relaxation enhancing}'' (see the precise definition in \cite{ZlatosConstantineKiselevRyzhik}).  Roughly,  a \emph{relaxation enhancing} flow is the one that makes the deviation of a solution of (\ref{constantinezlatoskiselev}) from its spatial average arbitrarily small in an arbitrarily short time $\tau$.    The criteria given in \cite{ZlatosConstantineKiselevRyzhik} were in the same spirit as those in Berestycki-Hamel-Nadirashvilli \cite{BHNadvec} which, however, were applied to study the speed up, by large advection, of KPP traveling fronts in the \emph{reaction-advection-diffusion setting}. There are important alternative ways (not via first integrals or existence of nontrivial eigenfunctions) to characterize mixing and the associated control problem which were derived by Thiffeault, Doering  \cite{thiffeaultdoering} and Lin-Thiffeault-Doering \cite{DoeringThifaultLin}. We also mention here one of the  earlier PDE works on propagation of fronts in an incompressible flow, by Majda-Souganidis \cite{MS94}, in which  the authors were able to write down and rigorously justify the appropriate renormalized effective large scale front equations for premixed turbulent combustion with two-scale incompressible velocity fields within the thermal-diffusive approximation without any ad hoc approximations.

 In our work, we deal with a large flow in the presence of diffusion and \emph{reaction}. In what follows, we will talk about the results which concern the asymptotic behavior of the, now parametric, KPP minimal speed $c^{*}_{\Omega, A, Mq, f}(e)$ when the amplitude $M$ of the advection $q$ is large. 
 
   Berestycki \cite{berestycki} and Heinze \cite{heinze} considered a particular class of incompressible flows, namely shear-flows, and proved that in any dimension $N$,  they speed-up the propagation linearly. That is, $c^{*}_{\Omega, A,Mq,f }(e)/M\rightarrow l>0$, as $M\rightarrow+\infty$, \emph{provided that}  $q$ is a \emph{shear flow} (i.e. $q(x_{1},\cdots,x_{N})=(\alpha(x_{2},\cdots,x_{N}),0\cdots,0)$ over $\R^{N}$ and $e=(1,0,\cdots,0)$). Later on, Berestycki, Hamel and Nadirashvilli \cite{BHNadvec} gave upper and lower bounds, which do not depend on the amplitude $M$, of the quantity $\ds{\frac{c^{*}_{\Omega, A, Mq,f}(e)}{M}}$ in a more general periodic framework and in the presence of a more general class of incompressible flows satisfying (\ref{cq}) and (\ref{cq2}). This heterogeneous framework is the one we described above. The upper and lower bounds of \cite{BHNadvec} affirmed that the minimal KPP speeds $c^{*}_{\Omega, Mq,f}(e)$ behave as $O(M)$ when $M$ is large; however, the precise asymptotic behavior was not given in \cite{BHNadvec}. It became interesting to know the precise limit and whether $O(M)$ is the sharp asymptotic regime of the KPP minimal speeds or not. 

One first answer to this question appeared in \cite{NovikovRyzhik} by A. Novikov and L. Ryzhik who proved that, \emph{in the case $N=2$ and for the class of cellular incompressible flows}, the parametric minimal speed $c^{*}_{ Mq,f}(e)$ behaves as $O(M^{1/4})$ when $M\rightarrow+\infty$. Later, the question ``what are all the flows which produce the sharp regime $O(M)$''  was fully answered in the \emph{2 dimensional case} by our results in \cite{ek1}. We proved that, in a general periodic framework where $N=2$,  $\ds \lim_{M\rightarrow+\infty}\frac{c^{*}_{\Omega,Mq,f}(e)}{M}$ is positive (hence $O(M)$ is the sharp regime) if and only if the advection field $q$ admits a periodic unbounded streamline. Shear flows are a particular example of these fields. To summarize, in the case $N=2$, our results in \cite{ek1} give the sharp criterion for the linear speed up and the results of Novikov and Ryzhik \cite{NovikovRyzhik} give a sharp regime ($M^{1/4}$) for the family of cellular flows which do not have unbounded periodic streamlines. It is important to mention here that \cite{BHNadvec, ekcras, ek1, heinze, ryzhikKiselev, ZlatosRyzhik, zlatosARMA} and many other works  related to the influence of large advection on KPP fronts relied on the variational formula (\ref{var}) of KPP speed of propagation - which was proved  by Berestycki, Hamel, and Nadirashvilli in \cite{BHNper} and by Weinberger \cite{weinberger2002}. The work of Constantin, Kiselev, and Ryzhik \cite{constantinekiselevRyzhik} gave several lower and upper bounds for the speeds of traveling fronts in the case of a \emph{combustion-nonlinearity or general positive nonlinearities}. In the case of  coupled reaction-advection-diffusion systems, \cite{ryzhikKiselev} gives interesting lower bounds of the parametric speeds, with respect to the amplitude of the advection, and proves that shear flows speed up the propagation in a higher rate than cellular flows.

We turn now to the precise limit as $M\rightarrow+\infty$ in any dimension which was derived in Zlato\v{s} \cite{zlatosARMA} and in our work \cite{ek1} (we recall this result in Theoem \ref{ek1} below). The speed up limits of KPP fronts by large incompressible advective fields involve a variational quantity where the functions called the ``first integrals'' of the advective field $q$ appear. These functions were used in our previous work \cite{ek1} and were previously used in Berestycki et al \cite{BHNadvec} as well as many other works in the literature (see Heinze \cite{heinze}, Ryzhik-Zlato\v{s} \cite{ZlatosRyzhik} and Zlato\v{s} \cite{zlatosARMA}).  The first integrals of $q$ are defined as follows.  
\begin{defin}[First integrals of an incompressible field \cite{BHNadvec}, \cite{ekcras}, \cite{ek1}] \label{first-integral} The set of first integrals of $q$ is defined by
\begin{equation}
\begin{array}{ll}
\mathcal{I}:=&\left\{w\in H^{1}_{loc}(\Omega),\;w \hbox{ is } L-\hbox{periodic in $x$ and }q\cdot\nabla w=0\hbox{ a.e. in }\Omega\right\}.
\end{array}
\end{equation}
Fixing a uniformly elliptic matrix $A=A(x,y)\in C^{2}(\overline{\Omega})$ satisfying (\ref{cA}), we define the following subset which relates first integrals with the reaction and diffusion terms of equation (\ref{heteqM}).
\begin{equation}\label{I1}
\mathcal{I}_1^A:=\left\{w\in\,\mathcal{I},\hbox{ such that }\int_C\zeta w^2\geq\int_C\nabla w\cdot A\nabla w\right\},
\end{equation}
where $\zeta(x,y)=f_{u}(x,y,0)$ is the positive $L$-periodic in $x$ function which we introduced in (\ref{cf2}).
\end{defin}

 \begin{atheo}[El Smaily - Kirsch \cite{ek1} and Zlato\v{s} \cite{zlatosARMA}]\label{ek1} Let $\Omega\subseteq \mathbb{R}^{N}=\R^{d}\times\R^{N-d}$ satisfy (\ref{comega}) and fix a unit direction $e\in\R^d$. Assume that the diffusion matrix $A$ and the nonlinearity $f$ satisfy (\ref{cA}), (\ref{cf1}) and (\ref{cf2}) and let $q$ be an advection field which satisfies (\ref{cq}) and (\ref{cq2}). Then,
\begin{equation}\label{large-advection}
\lim_{M\rightarrow+\infty}\ds{\frac{\ds{c^{*}_{\Omega,A,M\,q,f}(e)}}{M}}=\ds\max_{\ds{w\in \mathcal{I}_1^A}}\frac{\ds\int_C(q\cdot\tilde{e})\,w^2}{\ds\int_C w^2}.
\end{equation}
\end{atheo}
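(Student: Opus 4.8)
The plan is to exploit the variational formula \eqref{var} for $c^*_{\Omega,A,Mq,f}(e)$ and analyze the rescaled principal eigenvalue $k_M(\lambda)/\lambda$ as $M\to+\infty$. Writing $c^*_{\Omega,A,Mq,f}(e) = \min_{\lambda>0} k_M(\lambda)/\lambda$ where $k_M(\lambda)$ is the principal eigenvalue of $L_{\Omega,e,A,Mq,\zeta,\lambda}$, the first step is to understand the behavior of $k_M(\mu M)/M$ for a suitable rescaled spectral parameter $\mu = \lambda/M$. Substituting $\lambda = \mu M$ into \eqref{Leq} and dividing the operator by $M$, the dominant contributions as $M\to\infty$ come from the terms $q\cdot\nabla\psi/M \cdot M$ (order $1$) and $-\mu^2 M (q\cdot\tilde e)\psi$-type terms; a careful bookkeeping shows that the rescaled eigenvalue $k_M(\mu M)/M$ converges to a limiting quantity governed only by the transport operator $q\cdot\nabla$ and the weight $q\cdot\tilde e$. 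This is a singular perturbation / homogenization-type limit, and the natural tool is the Rayleigh quotient characterization of the principal eigenvalue after symmetrization.

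The second step is to identify the limit variationally. Because $q$ is incompressible and satisfies the boundary condition $q\cdot\nu=0$, the operator $\psi\mapsto q\cdot\nabla\psi$ is skew-adjoint on $L^2(C)$, so its ``kernel direction'' singles out exactly the first integrals $\mathcal{I}$. Test functions concentrating (in an appropriate weak sense) on $\mathcal{I}$ see the advection term only through the quadratic form $w\mapsto \int_C (q\cdot\tilde e) w^2$, while the diffusion and reaction survive as $\int_C \nabla w\cdot A\nabla w$ and $\int_C \zeta w^2$. One should prove the two matching bounds separately:
\begin{itemize}
\item \textbf{Lower bound:} For any $w\in\mathcal{I}_1^A$ with $\int_C(q\cdot\tilde e)w^2>0$, plug $\psi = w$ (or a smooth approximation of $w$, using density of smooth first integrals which follows from the regularity theory cited earlier) into the Rayleigh quotient for $k_M(\lambda)$; optimizing over $\lambda$ shows $\liminf_M c^*_{\Omega,A,Mq,f}(e)/M \ge \int_C(q\cdot\tilde e)w^2 / \int_C w^2$, using that the $\int_C\zeta w^2 \ge \int_C\nabla w\cdot A\nabla w$ condition makes the non-advective contribution nonnegative after the right choice of $\lambda$.
\item \textbf{Upper bound:} For a sequence of near-optimizers $\psi_M$ in the Rayleigh quotient for $k_M(\lambda_M)/\lambda_M$ with $\lambda_M$ of order $M$, derive uniform $H^1(C)$ bounds from the ellipticity of $A$ and the forced cancellation of the large skew term, extract a weak limit $w_\infty$, show $q\cdot\nabla w_\infty = 0$ (so $w_\infty\in\mathcal{I}$), show the reaction/diffusion balance forces $w_\infty\in\mathcal{I}_1^A$, and pass to the limit in the quotient by lower semicontinuity of the Dirichlet form and weak-$L^2$ convergence (plus a compactness argument, e.g. Rellich, to handle $\int_C(q\cdot\tilde e)\psi_M^2$).
\end{itemize}

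The third step is simply to combine the two bounds, noting that the maximum on the right-hand side of \eqref{large-advection} is attained (the set $\mathcal{I}_1^A$ is weakly closed and the Rayleigh-type quotient $\int_C(q\cdot\tilde e)w^2/\int_C w^2$ is bounded above since $q\in C^{1,\delta}(\overline\Omega)$ is bounded, with the supremum achieved by a standard direct-method argument on the unit sphere of $L^2(C)$ intersected with $\mathcal{I}_1^A$); if the supremum is zero — i.e. $q\cdot\tilde e\le 0$ on the support of every first integral — the limit is $0$ and one argues separately via the upper bound of \cite{BHNadvec} that $c^*_{\Omega,A,Mq,f}(e)/M\to 0$.

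\emph{Main obstacle.} The delicate point is the upper bound: passing to the limit in the rescaled eigenvalue problem requires controlling the near-maximizers $\psi_M$ despite the large coefficient $M$ in front of the (sign-indefinite, non-coercive) transport term $q\cdot\nabla\psi$. One must show that the part of $\psi_M$ transverse to $\mathcal{I}$ is forced to vanish in the limit — quantitatively, that $\|q\cdot\nabla\psi_M\|_{L^2}$ is $o(\sqrt M)$ or better — which is where incompressibility (skew-adjointness) and the precise $M$-scaling $\lambda_M\sim \mu M$ must be used together; any slack in this estimate breaks the identification of the limit with a genuine first integral. A secondary technical nuisance is that first integrals are a priori only $H^1_{loc}$, so the density of smooth (or at least $H^1(C)$) first integrals, and the legitimacy of using them as test functions in $E$ after localization near $\partial\Omega$, needs to be invoked carefully — but this is exactly the kind of statement the preceding sections of the paper are designed to supply.
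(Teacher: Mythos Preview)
This theorem is not proved in the present paper. It is stated in Subsection~\ref{review} as Theorem~\ref{ek1} under the heading ``A brief review of relevant results'' and is explicitly attributed to the earlier works \cite{ek1} and \cite{zlatosARMA}; the proofs section (Section~\ref{proofs}) contains only the proofs of Lemma~\ref{ergodic-lemma} and Theorems~\ref{main}, \ref{H1-and-dim}, \ref{easyergodic}, \ref{ngeq5}, \ref{shearthm}. So there is no ``paper's own proof'' to compare your proposal against: the result is quoted as a known input, and the paper's contributions lie downstream of it.

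That said, your outline is broadly the right shape for how the result is actually established in the cited references. The variational formula~\eqref{var}, the rescaling $\lambda=\mu M$, the Rayleigh-quotient lower bound obtained by testing against $w\in\mathcal I_1^A$, and the upper bound via $H^1$-compactness of near-optimal eigenfunctions whose weak limits are forced into $\mathcal I$ --- this is essentially the strategy of \cite{ek1} and \cite{zlatosARMA}. One small correction: in your ``dominant contributions'' bookkeeping you write ``$-\mu^2 M(q\cdot\tilde e)\psi$-type terms'', but substituting $\lambda=\mu M$ into~\eqref{Leq} gives the zeroth-order advective contribution $-\lambda q\cdot\tilde e\,\psi=-\mu M(q\cdot\tilde e)\psi$, which is linear in $\mu$, not quadratic; the quadratic-in-$\lambda$ term is $\lambda^2\tilde e A\tilde e$, which involves the diffusion, not $q$. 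This matters because the limiting variational problem pairs the linear-in-$\mu$ term $\int_C(q\cdot\tilde e)w^2$ against the constraint coming from the diffusion/reaction balance, and getting the powers wrong would scramble the identification of $\mathcal I_1^A$. Apart from that slip, the sketch is sound as a plan --- but for the purposes of this paper, no proof is required here.
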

We will  work in dimensions $N\geq 3$ and give a class of flows, which are not shear flows, and which lead to a linear speed up of the KPP speed under certain conditions which turn out to be necessary and sufficient. \emph{By ``linear speed-up'' we mean that the limit (\ref{large-advection}) is strictly positive}.  

\begin{rem}\label{observation}We see from the above definition that if $w\in \mathcal I$,  then $w+\lambda\in \mathcal I$  for any constant $\lambda \in \R$. This yields that for any $w\in \mathcal I,$ there exists a sufficiently large constant $K:=K(w,C)$ so that $w+K$ belongs to $\mathcal I_{1}^{A}.$ This simple observation will be useful for us while seeking a first integral of $q$ which gives a positive limit in (\ref{large-advection}). We can now see that this limit, which is given as a maximum over $\mathcal{I}_{1}^{A}$, will be positive as long as one can find $w_{0}\in\mathcal{I}$ (not necessarily $\mathcal{I}_{1}^{A}$) such that $\int_{C}q\cdot\tilde{e} w_{0}^{2}\neq0$.
\end{rem}

\noindent We will use of the correspondence between the advection field $q$ and its `flow' in several proofs and statements. We recall this well known correspondence in the following.
\begin{defin}[Associated flow and stability of a set]
Given a vector field $q:\Omega\rightarrow\mathbb{R}^{N}$ (in this present work, $q\in C^{1,\delta}(\overline\Omega)$ and it is periodic with respect to $x$), the flow associated to $q$ or simply the flow of $q$,  is the one-parameter family of diffeomorphisms 
$\Phi:=\{\Phi_{s}\}_{s\in\R}$ generated by $q$ where $\Phi:\R\times\Omega\rightarrow\mathbb{R}^{N}$ is the unique solution of
\begin{equation}\label{flow}
\left\{\begin{array}{rl}
\ds{\frac{d}{ds}\Phi(s,x)}=&q(\Phi(s,x)), \vspace{4 pt}\\
\Phi(0,x)=&\Phi_{0}(x)=x.
\end{array}
\right.
\end{equation}

\noindent It is common to associate to $\Phi$ the one parameter family $\{\Phi_{s}\}_{s\in\R}$ where for each $s\in\R,$ $\Phi_{s}:\Omega\rightarrow \R^{N}$ is the map defined as $\Phi_{s}(x):=\Phi(s,x)$ for all $x\in\Omega$.  We recall here that for all $t,s\in\R,$ $\Phi_{s}\circ\Phi_{t}=\Phi_{t+s}$ and $\Phi_{0}=Id$.\\ In this context, a set $A\subseteq \Omega$ is said to be stable by the flow of $q$ if $\Phi_{t}(A)\subseteq A$ for all $t\in \R.$

\end{defin}
\begin{defin}[Streamlines or particle trajectories]Let $x \in \Omega$, and $\phi_x: \R \to \R^N$ be the solution of the following nonlinear ODE
$$
\left\{
\begin{array}{ccl}
\phi_x'(t) & = & q(\phi_x(t)), \\
\phi_x(0) & = & x.
\end{array}
\right.
$$
The streamline of $q$ through the point $x\in\Omega$, denoted by $T_{x}$,  is the set
\begin{equation}\label{stream}
T_x = \{\phi_x(t), \ t \in \R\}.
\end{equation}

\end{defin}

\begin{rem} \rm The streamlines of $q$ define a partition on the set $\Omega$. Notice that 
\begin{center}$x\in \Omega$ and $T_x = \{x\}$ if and only if $q(x)=0$.\end{center}
\end{rem}


 \section{Main results: first integrals on ergodic components and speed-up of KPP fronts ($N\geq 3$)}\label{main-links}
 
 In our analysis of variational quantities of the type (\ref{large-advection}), which involve first integrals of the incompressible field $q$, it turns out  that  `\emph{ergodicity}' plays an important role \emph{in the cases $N\geq3$}. A simple way to understand this is by noticing that the condition $q\cdot \nabla w=0$, when $N=3$, means that the streamlines of $q$ are tangent to the regular level surfaces of $w$. Having $N\geq 3$ allows incompressible flows to have more degrees of freedom and this leads to more randomness in the structure of their streamlines which makes it complicated to study the level sets (surfaces) which are tangent to these trajectories. We will give the precise definition of what we call an ergodic component of an  incompressible vector field and then we prove, in Theorem \ref{main}, that over these components the first integrals of $q$ must be constant. We then apply this result to conclude about the variational quantity (\ref{large-advection}) in the case where $q$ admits ergodic components.

\noindent    \emph{Throughout this paper, we denote the Lebesgue measure on $\mathbb{R}^N$ by $\mathcal{L} ^N$}.
\begin{defin}[Ergodic components of a vector field, $N\geq 3$]\label{ergodic-comp}
Assume $N\geq3$. A set $V\subseteq \Omega$ is called an ergodic component of the vector field $q$ if $V$ is Lebesgue measurable with $\mathcal{L}^N(V)>0$, $V$ is stable by the flow of $q$ and it satisfies
$$(W \subset V \text { and } W \text{ stable by the flow of } q )\Rightarrow (\mathcal{L}^N(W)=0 \text{ or } \mathcal{L}^N(V\setminus W)=0.)$$
\end{defin}
\noindent In other words, an ergodic component in $\Omega$ produced by the advection $q$ is, in a sense,  \emph{minimal}, up to a set of measure zero, in the family of sets which are \emph{stable} by the flow associated to $q$.

\noindent It is important to know that, in the case $N=2$, such ergodic components do not exist for incompressible flows satisfying (\ref{cq}) and (\ref{cq2}). This will be proved in Appendix \ref{No-ergodicityin2d} at the end of this paper.
 
\noindent We can now state the following theorem about first integrals. \emph{The proofs of Lemma \ref{ergodic-lemma} and theorems \ref{main}, \ref{H1-and-dim}, \ref{easyergodic}, \ref{ngeq5} and \ref{shearthm} will be given in Section \ref{proofs} below.}

\begin{thm}\label{main} Let $\Omega$ be an open subset of $\mathbb{R}^N$ satisfying (\ref{comega}) (or more generally an $N$-dimensional manifold, like a flat torus).
Let $q \in C^{1,\delta}(\overline{\Omega})$ be a divergence-free vector field, and $w$ be a first integral of $q$. Then, $w$ is constant a.e. on any ergodic component of the flow.
\end{thm}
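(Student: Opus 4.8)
The plan is to exploit the definition of an ergodic component $V$ together with the fact that $w$ is a first integral, i.e.\ $q\cdot\nabla w=0$ a.e.\ in $\Omega$. The key geometric point is that a first integral should be (almost) invariant along the flow $\{\Phi_s\}$ of $q$, and the ergodicity of $V$ forces any flow-invariant measurable function on $V$ to be constant a.e. First I would make precise the passage from the pointwise identity $q\cdot\nabla w=0$ to flow-invariance: for $w\in H^1_{loc}$ and $q\in C^{1,\delta}$, the composition $s\mapsto w(\Phi_s(x))$ is, for a.e.\ $x$, absolutely continuous with derivative $q(\Phi_s(x))\cdot\nabla w(\Phi_s(x))=0$, hence $w\circ\Phi_s=w$ a.e.\ for every fixed $s$. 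The rigorous justification uses that $\Phi_s$ is a volume-preserving ($\nabla\cdot q=0$) bi-Lipschitz diffeomorphism, so it maps null sets to null sets and one can mollify $w$, apply the classical chain rule to $w_\varepsilon\circ\Phi_s$, and pass to the limit in $L^1_{loc}$; the incompressibility is exactly what makes the change-of-variables bounds uniform in $\varepsilon$.

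Next I would turn flow-invariance into the statement that \emph{every superlevel set of $w$ is stable by the flow}, up to measure zero. Fix $a\in\R$ and set $W_a:=\{z\in V: w(z)>a\}$. From $w\circ\Phi_s=w$ a.e.\ on $\Omega$ (and hence on $V$, which is itself stable) one gets $\mathcal L^N\big(\Phi_s(W_a)\,\triangle\,W_a\big)=0$ for every $s\in\R$. To feed this into Definition \ref{ergodic-comp}, which demands a genuinely stable subset, I would replace $W_a$ by its ``invariant hull'': let $\widetilde W_a:=\{z\in V:\ \Phi_s(z)\in W_a\ \text{for a.e.\ }s\in\R\}$ or, more robustly, $\widetilde W_a:=\bigcap_{q\in\Q}\Phi_{-q}(W_a)$ suitably saturated; using the group law $\Phi_t\circ\Phi_s=\Phi_{t+s}$, continuity of $t\mapsto\Phi_t$, volume preservation, and the a.e.\ invariance just established, one checks that $\widetilde W_a$ is exactly stable ($\Phi_t(\widetilde W_a)\subseteq\widetilde W_a$ for all $t$) and that $\mathcal L^N(\widetilde W_a\triangle W_a)=0$. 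Then the minimality clause in Definition \ref{ergodic-comp}, applied to $\widetilde W_a\subset V$, forces $\mathcal L^N(\widetilde W_a)=0$ or $\mathcal L^N(V\setminus\widetilde W_a)=0$, i.e.\ $\mathcal L^N(W_a)=0$ or $\mathcal L^N(V\setminus W_a)=0$.

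Finally I would conclude by a standard measure-theoretic argument: define $c:=\sup\{a\in\R:\ \mathcal L^N(\{z\in V:w(z)>a\})=\mathcal L^N(V)\}\in[-\infty,+\infty]$. For $a<c$ we have $w>a$ a.e.\ on $V$, and for $a>c$ we have $w>a$ on a null subset of $V$; letting $a\uparrow c$ and $a\downarrow c$ along rational sequences shows $w=c$ a.e.\ on $V$, and since $w\in H^1_{loc}$ is real-valued a.e., $c$ is finite. The main obstacle, and the step deserving the most care, is the first one: rigorously deriving $w\circ\Phi_s=w$ a.e.\ from $q\cdot\nabla w=0$ for a merely $H^1_{loc}$ function $w$ — the naive ODE argument is only valid for smooth $w$, and one must use mollification together with the incompressibility of $q$ (equivalently, that each $\Phi_s$ preserves $\mathcal L^N$) to control the error terms and pass to the limit. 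A secondary technical point is the construction of the exactly-stable hull $\widetilde W_a$ agreeing with $W_a$ up to measure zero, for which the group property and the continuity of the flow in $t$ are the essential ingredients; this is where the hypothesis $N\geq 3$ is not actually needed (the argument is dimension-free, consistent with the remark that in $N=2$ such components simply fail to exist).
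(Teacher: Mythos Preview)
Your proposal is correct and follows essentially the same route as the paper: establish that (super)level sets of $w$ are flow-invariant up to null sets via mollification and incompressibility (this is exactly the content of the paper's Lemma~\ref{ergodic-lemma}), then feed these sets into the ergodicity hypothesis and run a distribution-function argument to isolate a single value $c$ (the paper uses sublevel sets and the function $t\mapsto\mathcal L^N(w^{-1}((-\infty,t])\cap V)$, but the idea is identical). Your explicit construction of the exactly-stable hull $\widetilde W_a$ is in fact more careful than the paper, which applies Definition~\ref{ergodic-comp} directly to a set that is only stable ``up to a set of measure~$0$'' without further comment.
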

\noindent The proof of Theorem \ref{main} relies on the following lemma which  holds in any dimension $N$.
\begin{lem}\label{ergodic-lemma} Assume that $\Omega\subseteq\R^{N}$ is an open connected domain which satisfies (\ref{comega}). Let $w$ be a first integral of $q$ on $\Omega$ and $I$ a measurable subset of $\mathbb{R}$. Then, up to a set of measure $0$, $w^{-1}(I)$ is stable by the flow of $q$. Furthermore,
$$
\forall t \in \mathbb{R}, \quad \mathcal{L}^{N}\left(\Phi_t(w^{-1}(I))\Delta \left(w^{-1}(I)\right)\right)=0,
$$
where $\Delta$ stands for the symmetric difference\footnote{for any two sets $A$ and $B$, $A\Delta B$ stands for the $(A\setminus B) \cup (B\setminus A)=(A\cup B)\setminus (A \cap B)$}and $\Phi$ is the flow associated to $q$.
\end{lem}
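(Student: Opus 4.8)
The plan is to establish the invariance of level sets $w^{-1}(I)$ up to null sets by exploiting the fact that the flow $\Phi_t$ preserves Lebesgue measure (because $\nabla\cdot q=0$) together with the hypothesis $q\cdot\nabla w=0$. The first step is to reduce to smooth first integrals: since $w\in H^1_{loc}(\Omega)$, I would approximate $w$ by mollification. However, mollifying does not exactly preserve $q\cdot\nabla w=0$, so instead I prefer to argue directly using the absolute continuity of $w$ along almost every streamline. By the incompressibility of $q$ and Fubini's theorem in flow-box coordinates, for $\mathcal{L}^N$-almost every $x$ the map $t\mapsto w(\Phi_t(x))$ is in $H^1_{loc}(\R)$ with derivative $(q\cdot\nabla w)(\Phi_t(x))=0$ a.e.\ $t$; hence $t\mapsto w(\Phi_t(x))$ is constant. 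This is the key point and, I expect, the main technical obstacle: making rigorous the statement ``$w$ restricted to almost every trajectory is constant'' when $w$ is only $H^1_{loc}$. One clean way is to use the change of variables $\Phi$ (a $C^1$ diffeomorphism in $(t,x)$ since $q\in C^{1,\delta}$) to write, for any test function, $\int \varphi(t,x)\,(q\cdot\nabla w)(\Phi_t(x))\,dt\,dx$ and recognize it via the chain rule in Sobolev spaces as $\int \varphi\,\partial_t(w\circ\Phi)\,dt\,dx$; since $q\cdot\nabla w=0$ a.e.\ and $\Phi$ preserves measure, $\partial_t(w\circ\Phi)=0$ as a distribution, so $w\circ\Phi$ is ($\mathcal{L}^N$-a.e.) independent of $t$.

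Once that is in hand, the rest is bookkeeping. Fix a measurable $I\subseteq\R$ and set $E=w^{-1}(I)$. For $\mathcal{L}^N$-a.e.\ $x$, $w(\Phi_t(x))=w(x)$ for all $t$, hence $x\in E \iff \Phi_t(x)\in E$ for all $t$. Let $N_0$ be the null set of exceptional $x$. For a fixed $t$, I want $\mathcal{L}^N(\Phi_t(E)\Delta E)=0$. Write $\Phi_t(E)\setminus E \subseteq \Phi_t(N_0)$: indeed if $y\in\Phi_t(E)\setminus E$ then $y=\Phi_t(x)$ with $x\in E$ but $y\notin E$, so $x$ must lie in $N_0$ (otherwise the equivalence above would force $y\in E$). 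Since $\Phi_t$ preserves Lebesgue measure, $\mathcal{L}^N(\Phi_t(N_0))=\mathcal{L}^N(N_0)=0$, so $\mathcal{L}^N(\Phi_t(E)\setminus E)=0$. Symmetrically (using $\Phi_{-t}$), $E\setminus\Phi_t(E)=\Phi_t(\Phi_{-t}(E)\setminus E)$ and $\Phi_{-t}(E)\setminus E\subseteq N_0$ by the same argument, giving $\mathcal{L}^N(E\setminus\Phi_t(E))=0$. Hence $\mathcal{L}^N(\Phi_t(E)\Delta E)=0$ for every $t\in\R$, which is the displayed formula.

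Finally, to get the stated ``stability up to a set of measure $0$,'' I would take a countable dense set $\{t_n\}\subseteq\R$ and set $\widetilde{E} = \bigcap_{n} \bigcap_{m} \Phi_{t_n}^{-1}\big(\text{something}\big)$; more concretely, let $\widetilde{E}$ be the set of $x$ such that $w(\Phi_t(x))=w(x)$ for all $t\in\R$, which has full measure in the sense that $\mathcal{L}^N(E\Delta\widetilde{E})=0$ after intersecting with the complement of $N_0$. Then $\Phi_t(\widetilde{E}\cap E)\subseteq E$ modulo the null set $N_0$, which is exactly the assertion that $w^{-1}(I)$ is stable by the flow up to a set of measure zero. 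I would remark that continuity of $t\mapsto\Phi_t(x)$ lets one upgrade ``for a.e.\ $t$'' statements to ``for all $t$'' on the relevant trajectories, so there is no issue with the uncountable quantifier over $t$. The only genuinely delicate step is the Sobolev chain rule / absolute-continuity-on-lines argument in the first paragraph; everything after it is elementary measure theory using $\mathcal{L}^N(\Phi_t(\cdot))=\mathcal{L}^N(\cdot)$.
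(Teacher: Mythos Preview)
Your proposal is correct and reaches the same conclusion, but the route differs from the paper's. You argue pointwise along trajectories: using the Sobolev chain rule for the $C^{1}$ diffeomorphism $(t,x)\mapsto\Phi_t(x)$ together with measure preservation, you show $\partial_t(w\circ\Phi)=(q\cdot\nabla w)\circ\Phi=0$ distributionally, hence $w(\Phi_t(x))=w(x)$ for a.e.\ $x$ and all $t$, and then do clean set-theoretic bookkeeping with the null set $N_0$. The paper instead proves the integral inequality
\[
\int_{A}\bigl|v(x)-v(\Phi_{-t}(x))\bigr|\,dx \;\le\; |t|\int_{\Omega}\bigl|q\cdot\nabla v\bigr|\,dx
\]
first for $v\in C^\infty_b$ via the fundamental theorem of calculus, Fubini, and the Jacobian-$1$ change of variables, then extends it to $H^1_{loc}$ by density; specializing to $v=w$ and $A=\Phi_t(w^{-1}(I))$ gives $w=w\circ\Phi_{-t}$ a.e.\ on that set, and the inclusion $\Phi_t(w^{-1}(I))\subseteq w^{-1}(I)$ (up to null sets) plus measure preservation finishes. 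Your approach is more conceptual and yields the stronger a.e.\ trajectory-wise constancy directly, at the cost of invoking the Sobolev chain rule under composition; the paper's approach is more elementary---only smooth calculus plus a density step---and sidesteps any discussion of traces of $w$ along individual streamlines. One small remark: your third paragraph is redundant once you have $\mathcal{L}^N(\Phi_t(E)\Delta E)=0$ for every $t$, since that \emph{is} stability up to a null set; and your justification for the ``for all $t$'' upgrade should cite the 1D Sobolev embedding $H^1_{loc}(\R)\hookrightarrow C(\R)$ for $t\mapsto w(\Phi_t(x))$ rather than continuity of $\Phi_t(x)$ alone, since $w$ itself need not be continuous.
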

\begin{rem} We emphasize here that the result of Lemma \ref{ergodic-lemma} is valid in any dimension $N$---not only in dimensions smaller than or equal to 3. It is also worth mentioning that assumptions (\ref{ergodic-lemma}) on the domain $\Omega$ are not all necessary for the result to hold (It will be easy to see this throughout the proof of the lemma), but we use these assumptions to guarantee the existence of traveling fronts which are our main motivation for this study. \end{rem}

\subsection{Impact of the configuration of ergodic components on first integrals}\label{configuration}
In Theorem \ref{main} above, we established a link between ergodicity and first integrals.   The following theorem aims to show the influence of the dimension on the $H^{1}$ regularity of a function which admits two different constant values over two balls which are tangent to each other (in the next results, these functions will play the role of first integrals to the flow). We will  use the next theorem to study the particular class of vector fields having ergodic components of positive Lebesgue measure and investigate whether they can give a  linear speed up of the KPP speed $c^{*}$ or not. 

\begin{thm}\label{H1-and-dim}
Let $N \in \mathbb{N}$ such that $N \geq 2$, $B_1$ the open ball in $\mathbb{R}^N$ of radius 1 and center $(0,\cdots, 0,1)$, $B_2$ the open ball of radius 1 and center $(0,\cdots,0,-1)$ and $U$ a bounded open subset of $\mathbb{R}^N$ containing the convex hull of $B_1 \cup B_2$.\\ For a function $u: U \rightarrow\mathbb{R}$ and for $(\lambda,\mu) \in \mathbb{R}^2$ be any couple, we say that $u$ verifies  $(C_{\lambda,\mu})$ if 
\begin{equation*}
(C_{\lambda,\mu}) \qquad \qquad \qquad \qquad \qquad \qquad u|_{B_1}=\lambda \text { and } u|_{B_2}=\mu. \qquad \qquad \qquad \qquad \qquad \qquad
\end{equation*}
Depending on the dimension $N$, we have the following
\begin{enumerate}
\item If $N \leq 3$, and if $u\in H^{1}(U)$ verifies $(C_{\lambda,\mu})$, we must have $\lambda=\mu$.
\item If $N \geq 4$, then for any couple $(\lambda,\mu)\in \mathbb{R}^{2}$, there exists a function $u \in H^1(U)$ verifying $(C_{\lambda,\mu})$.
\end{enumerate}

\end{thm}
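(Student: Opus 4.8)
\textbf{Proof proposal for Theorem \ref{H1-and-dim}.}

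The plan is to treat the two statements separately, as they require genuinely different ideas. For part (1), the obstruction to having $\lambda \neq \mu$ comes from a trace/capacity argument: if $u \in H^1(U)$ is constant equal to $\lambda$ on $B_1$ and $\mu$ on $B_2$, then on a line segment joining a point of $B_1$ to a point of $B_2$ the restriction of $u$ must ``jump'' by $\mu - \lambda$, but in low dimensions $H^1$ functions have enough regularity on almost every line that this forces $\lambda = \mu$ once we know the jump persists on a positive-measure set of lines. Concretely, the two balls are internally tangent at the origin, and I would parametrize points near the origin: write $z = (x', x_N) \in \R^{N-1} \times \R$. For small $|x'|$, the vertical line through $x'$ meets $B_1$ in an interval of length comparable to $|x'|^2$ near height $+|x'|^2/2 + O(|x'|^4)$ and meets $B_2$ in an interval near height $-|x'|^2/2 + O(|x'|^4)$; the ``gap'' between the two balls along that line has length $\sim |x'|^2$. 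By the standard fact that an $H^1$ function has an absolutely continuous representative on $\mathcal{L}^{N-1}$-a.e. line, Cauchy–Schwarz gives, for a.e. such $x'$,
\begin{equation*}
|\mu - \lambda|^2 \leq \Big(\int_{I(x')} |\partial_{x_N} u(x', t)|\, dt\Big)^2 \leq |I(x')| \int_{I(x')} |\partial_{x_N} u(x', t)|^2\, dt,
\end{equation*}
where $I(x')$ is the vertical gap interval, of length $|I(x')| \leq c|x'|^2$. Integrating over $x'$ in a small ball $\{|x'| < r\}$ of $\R^{N-1}$ and using polar coordinates in $\R^{N-1}$ (area element $\sim \rho^{N-2} d\rho$), the right-hand side is bounded by $\|\nabla u\|_{L^2(U)}^2 \cdot \sup_{|x'|<r}|I(x')|$, while the left-hand side is $|\mu-\lambda|^2 \cdot c' r^{N-1}$; but more carefully one should integrate the pointwise inequality divided by nothing and instead bound $|\mu-\lambda|^2 \int_{|x'|<r} \frac{dx'}{|I(x')|} \leq \|\nabla u\|^2_{L^2}$. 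Since $\int_{|x'|<r} |x'|^{-2}\, dx' \sim \int_0^r \rho^{N-3}\, d\rho$ diverges precisely when $N \leq 2$ and is finite but can be made to work when $N=3$ by a slightly sharper bookkeeping of the $O(|x'|^4)$ corrections — actually for $N=3$ one uses that the gap, measured correctly, still yields a logarithmically divergent or borderline integral — we conclude $\mu = \lambda$. I expect the $N=3$ borderline case to be the main obstacle here, and the fix is to be careful that the relevant integral is $\int_0^r \rho^{N-2}/\rho^2 \, d\rho = \int_0^r \rho^{N-4}\,d\rho$ which diverges for $N \leq 3$, giving the contradiction cleanly; the key is getting the correct power of $|x'|$ for the gap length and the correct Jacobian.

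For part (2), when $N \geq 4$ I would exhibit an explicit function. The natural candidate is a radial interpolation based on the ``distance to the tangency point'' combined with the vertical coordinate: the two balls touch only at the origin, so away from the origin there is room to interpolate smoothly, and the only question is integrability of the gradient near the origin. I would set $u$ equal to $\lambda$ on $B_1$, $\mu$ on $B_2$, and on the complement define $u$ by a formula that depends on the solid angle / the sign of $x_N$ and transitions across the thin region between the balls; for instance, in the gap region near the origin, interpolate linearly in $x_N$ across the interval $I(x')$ so that $|\partial_{x_N} u| \sim |\mu - \lambda|/|I(x')| \sim |x'|^{-2}$. Then $\int |\nabla u|^2 \lesssim \int_{|x'|<r} |x'|^{-4} \cdot |I(x')| \, dx' \sim \int_{|x'|<r}|x'|^{-4}\cdot|x'|^2\,dx' = \int_{|x'|<r}|x'|^{-2}\,dx' \sim \int_0^r \rho^{N-4}\,d\rho$, which is finite exactly when $N \geq 4$. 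One then checks that $u$ can be extended to a genuine $H^1(U)$ function (smoothing the interpolation away from the origin, matching boundary values on $\partial B_1$ and $\partial B_2$, and extending to all of $U$ by any $H^1$ extension since $U$ is bounded and contains the convex hull). The main care needed is to verify that this piecewise definition actually lies in $H^1$ — i.e. no singular part on the interfaces $\partial B_1, \partial B_2$ — which holds because $u$ is continuous across those spheres by construction, and continuous $W^{1,2}$-gluing across a Lipschitz interface is standard.

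In summary, the crux of the whole theorem is the computation of the power law for the gap length between two internally tangent unit balls — it is $|I(x')| \asymp |x'|^2$ — after which both directions reduce to the elementary fact that $\int_0^r \rho^{N-4}\, d\rho$ converges iff $N \geq 4$. I would present part (1) as the contrapositive (assume $\lambda \neq \mu$, derive $\nabla u \notin L^2$) and part (2) as an explicit construction, and I expect the bookkeeping in the $N = 3$ case of part (1) to be where one must be most careful to get a strict divergence rather than a borderline estimate.
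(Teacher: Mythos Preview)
Your proposal is correct and follows essentially the same route as the paper: both proofs parametrize by cylindrical coordinates $(r,\omega,x_N)$ with $r=|x'|$, use that the vertical gap between $\partial B_1$ and $\partial B_2$ has length $2(1-\sqrt{1-r^2})\asymp r^2$, apply Cauchy--Schwarz along each vertical segment, and reduce the dichotomy to the convergence of $\int_0^1 \rho^{N-4}\,d\rho$. Two minor differences are worth noting. First, you invoke the ACL characterization of $H^1$ directly, whereas the paper mollifies $u$ to $u_n\in C^\infty$ and works on the shrunken balls $B_{i,n}$ of radius $1-1/n$; your route is slightly more economical but requires being careful that the a.e.\ line on which the restriction is absolutely continuous really hits both $B_1$ and $B_2$ (it does, for $|x'|<1$). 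Second, for part~2 the paper writes the explicit interpolant $u(r,\omega,x_N)=\tfrac{\lambda+\mu}{2}+\tfrac{\lambda-\mu}{2(1-\sqrt{1-r^2})}\,x_N$ on the gap region and checks \emph{both} $\partial_{x_N}u$ and $\partial_r u$ are square-integrable near $r=0$; you only estimate $\partial_{x_N}u$ and defer the tangential derivative to ``smoothing away from the origin''. That smoothing is indeed necessary (the naive interpolant has a separate, non-essential singularity as $r\to 1$), and you are right that it is routine, but when you write this up you should either compute $\int|\partial_r u|^2$ explicitly near $r=0$ as the paper does, or make the cutoff argument precise. Your momentary confusion between $\rho^{N-3}$ and $\rho^{N-4}$ early in part~1 is just a Jacobian slip ($x'\in\R^{N-1}$, not $\R^N$) which you corrected; the final power-count $\int_0^r\rho^{N-4}\,d\rho$ is the right one and handles $N=3$ without any extra bookkeeping.
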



\begin{rem}\label{N=3-is-the-most}
We can  see now  that the existence of nontrivial  first integrals is \emph{particularly} subtle  in the case $N=3$:   First, we know that  incompressible fields satisfying (\ref{cq}-\ref{cq21}) and having ergodic components (in the sense of Definition \ref{ergodic-comp}) exist in the case $N\geq 3$ but not in the case $N=2$. On the other hand, Theorem \ref{main} and Theorem \ref{H1-and-dim} allow us to see that the $H^{1}_{loc}$-regularity of first integrals, for incompressible flows with ergodic components, is strongly affected by the configuration of these components in the case $N\leq3$. This will become more clear in the following results.
\end{rem}


\subsection{The KPP speed in a large advective field with ergodic components ($N\geq 3$)}

We will apply the results of Theorem \ref{main} and Theorem \ref{H1-and-dim} to give a class of incompressible flows, \emph{other than shear flows}, which make the limit (\ref{large-advection}) strictly positive \emph{in the case where the dimension is $N=3$ or higher}.   Our result in \cite{ek1} and the results of Novikov-Ryzhik \cite{NovikovRyzhik} show that an efficient way to study the influence of a large incompressible flow on the reactive-diffusive front is by looking at the components produced by this flow inside the domain of propagation. This simple observation led, in the 2 dimensional case, to the sharp criterion which roughly states: when $N=2$, the limit (\ref{large-advection}) is positive (i.e. the advection speeds up the KPP fronts linearly) if and only if it admits an unbounded periodic streamline (see \cite{ek1} for the proof).

 We defined ergodic components produced by an incompressible flow in Definition \ref{ergodic-comp} above.  We sketch here the strategy which we will use to handle the variational quantity (\ref{large-advection}) in the cases $N\geq 3$. For simplicity, suppose that $\Omega=\R^{N}$ and that we have a smooth incompressible flow $v$ over  $\Omega$ (for the existence of such $v$, see below) such that:
\begin{itemize}
\item[(A1)] $v=v(x_{1},\cdots,x_{N})=(v_{1},\cdots,v_{N})$  is periodic in $x_{1},\cdots, x_{N}$. 
 \item[(A2)] $v$ admits an ergodic component $V_{1}=\R \times D_{1} \subset \R^{N}$ ($D_{1}$ is a ball in $\R^{N-1}$) which is a cylinder in the direction $e=(1,0,\cdots,0)$.
 \item[(A3)] $v\equiv0$ on $(\R\times \partial D_{1})\cup (\R^{N}\setminus V_{1})=\partial V_{1}\cup (\R^{N}\setminus V_{1})=\overline{V_{1}^{c}}$,
\end{itemize}
then the set of first integrals $\mathcal {I}$ of $v$ contains only the $H^{1}_{loc}$ functions which are \emph{almost everywhere constant over the component $V_{1}$.} Theorems \ref{main} and \ref{H1-and-dim} will then be useful to give the answer to the question about the positivity of the limit (\ref{large-advection}) which is indeed the linear speed up of KPP fronts.

 \emph{Existence results of a flow $v$ satisfying (A1)-(A2)-(A3), which is incompressible, periodic and exhibit ergodic components, were proved in  details  by H.~Hu, Y.~Pesin, and A.~Talitskaya in \cite{hpt2004} which  followed a study done by Katok \cite{katok}.  The result of Pesin \emph{et al} \cite{hpt2004} states that  ``\emph{every compact manifold carries a hyperbolic ergodic flow}'' provided that the dimension of the manifold is \emph{greater or equal 3.}}

In our setting, we have a periodic structure in the domain $\Omega\subseteq\R^{N}$, and therefore, we can apply the results of Hu, Pesin and Talitskaya \cite{hpt2004} on the whole periodic set $\Omega$ and get the flow $v$. After a normalization to a zero-average flow $q$ (see next paragraph), we will have a vector field $q$ which satisfies all the properties to be considered as particular example of incompressible flows with ergodic components $V_{i}\subseteq \Omega$.

We work with advection fields which have zero average and admit ergodic components in the same direction. To guarantee that the advection  $q$ (in equation (\ref{heteqM})) is of zero average, consider 2 cylinders which are aligned in the direction of $e=(1,0,\cdots,0)$ denoted  by  $$V_{1}:=\R\times D_{1} \text{ and } V_{2}:=\R\times D_{2},$$ where $D_{1}$ and $D_{2}$ are two open balls in $\R^{N-1}$ having the \emph{same radius} $R$ and centered at $O_{1}(0,\cdots,0,a+2R+h)$ (for some $h\geq 0$)  and $O_{2}(0,\cdots,0,a)$ respectively  and such that $[0,L_{1}]\times (D_{1}\cup D_{2})\subseteq C$. The number $h\geq0$ is  the distance between $\partial V_{1}$ and $\partial V_{2}$ and the centers of $D_{1}$ and $D_{2}$ are at a distance $2R+h$ from each other.  
\begin{figure}[t]
  \centering
 \begin{center}
\begin{tikzpicture}[scale=.7]
\shade[top color=gray!50, bottom color=white] (.5,2.5) arc (0:90:.5cm and 1.5cm) -- (8,4) arc (90:0:.5cm and 1.5cm) --cycle;
\shade[top color=white, bottom color=gray!50] (.5,2.5) arc (0:-90:.5cm and 1.5cm) -- (8,1) arc (-90:0:.5cm and 1.5cm) --cycle;
\shade[draw,outer color=gray!50, inner color=white,very thick] (0,2.5) ellipse  (.5cm and 1.5cm);
\draw[very thick] (0,4)--(8,4) arc (90:-90:.5cm and 1.5cm) -- (0,1);
\shade[top color=gray!50, bottom color=white] (.5,-2.5) arc (0:90:.5cm and 1.5cm) -- (8,-1) arc (90:0:.5cm and 1.5cm) --cycle;
\shade[top color=white, bottom color=gray!50] (.5,-2.5) arc (0:-90:.5cm and 1.5cm) -- (8,-4) arc (-90:0:.5cm and 1.5cm) --cycle;
\shade[draw,outer color=gray!50, inner color=white,very thick] (0,-2.5) ellipse  (.5cm and 1.5cm);
\draw[very thick] (0,-1)--(8,-1) arc (90:-90:.5cm and 1.5cm) -- (0,-4);
\draw [very thick, dashed] (8,4) arc (90:270:.5cm and 1.5cm);
\draw [very thick, dashed] (8,-4) arc (-90:-270:.5cm and 1.5cm);
\draw[<->,very thick] (4,-.9)--(4,.9) node[midway,right]{\Large $h$};
\draw (3.5,0) node[left]{\Large $q=0$};
\draw (2.5,2.5) node[left]{\Large $v$};
\draw (2.5,-2.5) node[left]{\Large $-v$};
\draw (-1,2.5) node[left]{\Large $V_1$};
\draw (-1,-2.5) node[left]{\Large $V_2$};
\end{tikzpicture}
\end{center}
\caption{$N=3$, two cylindrical ergodic components of $q$ aligned in the same direction with a gap of height $h\geq0$ in between.}
  \label{ergcylinders}
\end{figure}
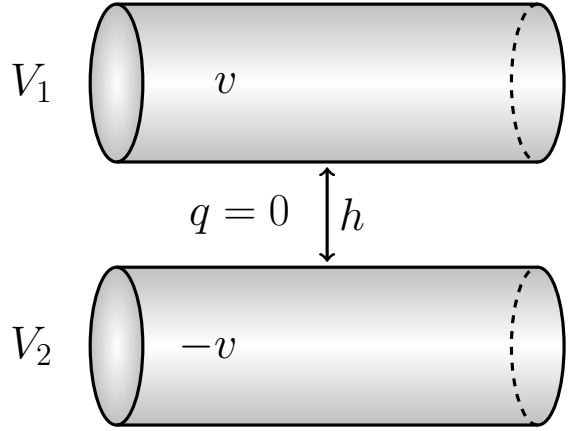
Then, define the vector field $q$ over $\R^{N}$  (see Figure \ref{ergcylinders}) by
\begin{equation}\label{qcylinders}
\left\{\begin{array}{ll}
q(x)=v(x) \text{ for all } x\in V_{1}, \vspace{5 pt}\\ \text{where $v$ is the vector field constructed in (A1)-(A3) above},\vspace{5 pt}\\
q(x_{1},\cdots,x_{N})= -\,v(x_{1},\cdots,x_{N-1},-x_{N}+2R+h+a), ~ x\in V_{2},\vspace{5 pt}\\
q(x)=0, ~x\in \overline{\Omega\setminus (V_{1}\cup V_{2})}\supseteq(\R\times \partial D_{1})\cup (\R\times \partial D_{2}).
\end{array}\right.
\end{equation}
The vector field $q$ in (\ref{qcylinders}) is then smooth, incompressible, periodic, admits two ergodic components $V_{1}$ and $V_{2}$ in the direction of $e$, vanishes on $\partial V_{1}$, $\partial V_{2}$ and in $\Omega \setminus (V_{1}\cup V_{2})$ and  satisfies $$\int_{C}q dx=\int_{[0,L_{1}] \times D_{1}}v(x)dx-\int_{[0,L_{1}]\times D_{2}}v(x_{1},x_{2},-x_{3}+2R+h)dx=\vec{0}.$$ 

\noindent We can now state the following theorem in the case $N=3$ which is also true in the case $N=4$\footnote{See Remark \ref{n=4} below the proof of Theorem \ref{easyergodic} about the case $N=4$.}.
\begin{thm}\label{easyergodic}
Let $\Omega=\R^{3}$ or $\R\times\omega$ where $\omega\subseteq\R^{2}$ satisfies (\ref{comega}) and let $V_{1}:=\R\times D_{1}$ and $V_{2}:=\R\times D_{2}$ be the cylindrical subsets of $\Omega$, defined above, in the same direction $e=(1,0,0)$. Let $q$ be a 3 dimensional incompressible flow of type (\ref{qcylinders}) which has a  zero-average over $C$ and admits two ergodic components $V_{1}$ and $V_{2}$. Consider the reaction-advection-diffusion equation (\ref{heteqM}) with this particular advection $q$, where $f$ and $A$ satisfy  (\ref{cf1}-\ref{cf2}) and (\ref{cA}) respectively. Then, the minimal KPP speed $c^{*}_{\Omega,A, Mq,f}(e)$ satisfies
\begin{equation}\label{positivespeedup3}
0<\lim_{M\rightarrow+\infty}\frac{c^{*}_{\Omega,A, Mq,f}(e)}{M}<\infty \text{ if and only if $h:=dist(\overline{V_{1}},\overline{V_{2}})>0$.}
\end{equation} 
In particular, if the ergodic components $V_{1}$ and $V_{2}$ of $q$ are tangent to each other, we then have 
$$\lim_{M\rightarrow+\infty}\frac{c^{*}_{\Omega,A, Mq,f}(e)}{M}=0.$$
\end{thm}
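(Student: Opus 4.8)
The plan is to reduce everything to the variational formula of Theorem~\ref{ek1}, namely
$$\lim_{M\to+\infty}\frac{c^{*}_{\Omega,A,Mq,f}(e)}{M}=\max_{w\in\mathcal I_1^A}\frac{\int_C (q\cdot\tilde e)\,w^2}{\int_C w^2},$$
and to analyze when the numerator can be made nonzero. By Remark~\ref{observation}, the limit is strictly positive if and only if there exists \emph{some} first integral $w_0\in\mathcal I$ (not necessarily in $\mathcal I_1^A$) with $\int_C (q\cdot\tilde e)\,w_0^2\neq 0$; finiteness of the limit always holds by the bounds of \cite{BHNadvec}. So the whole theorem becomes: such a $w_0$ exists $\iff h>0$.

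First I would describe $\mathcal I$ for the field $q$ in \eqref{qcylinders}. Since $q\equiv 0$ on $\overline{\Omega\setminus(V_1\cup V_2)}$, the constraint $q\cdot\nabla w=0$ is vacuous there, so a first integral is free outside $V_1\cup V_2$; on each $V_i$, which is an ergodic component of $q$ by hypothesis, Theorem~\ref{main} forces $w$ to be a.e.\ constant, say $w\equiv\lambda_i$ on $V_i$. Thus $w\in\mathcal I$ iff $w\in H^1_{loc}(\Omega)$, $L$-periodic in $x_1$, and $w=\lambda_1$ a.e.\ on $V_1$, $w=\lambda_2$ a.e.\ on $V_2$ for some constants $\lambda_1,\lambda_2$. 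Next, compute the numerator: because $q$ vanishes off $V_1\cup V_2$ and $w$ is constant ($=\lambda_i$) on $V_i$,
$$\int_C (q\cdot\tilde e)\,w^2=\lambda_1^2\int_{[0,L_1]\times D_1} v\cdot\tilde e\;dx+\lambda_2^2\int_{[0,L_1]\times D_2}(-v(\cdot,\cdot,-x_N+\ldots))\cdot\tilde e\;dx.$$
The zero-average computation already displayed in the paper (the chain giving $\int_C q\,dx=\vec 0$) shows the two integrals $\int_{[0,L_1]\times D_i} q\cdot\tilde e\,dx$ are equal in absolute value and opposite in sign; denote the common value of the first by $I$. Then $\int_C(q\cdot\tilde e)w^2=(\lambda_1^2-\lambda_2^2)\,I$. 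I would note that $I\neq 0$: $v$ is a nontrivial ergodic flow on the cylinder, so $v\cdot\tilde e$ cannot integrate to zero over the cross-section-times-period unless $v$ were, after averaging, orthogonal to $e$ — one excludes this either by the explicit Hu--Pesin--Talitskaya construction (the ergodic flow genuinely transports mass along the cylinder axis) or, if one wants a cleaner argument, by observing that if $\int_{[0,L_1]\times D_1}v\cdot\tilde e=0$ the theorem's conclusion would be $0$ regardless of $h$ and the stated equivalence would be false, so the construction must be chosen with $I\neq 0$; I would add a sentence pinning this down.

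Granting $I\neq 0$, the limit is positive $\iff$ there is a valid first integral with $\lambda_1^2\neq\lambda_2^2$, equivalently with $\lambda_1\neq\lambda_2$ (the square only matters up to sign, and e.g.\ taking $\lambda_2=0$, $\lambda_1\neq 0$ suffices). So everything comes down to: \emph{can an $H^1_{loc}$ function, periodic in $x_1$, take two distinct constant values on $V_1$ and on $V_2$?} This is exactly where Theorem~\ref{H1-and-dim} and the geometry enter, and it is the heart of the argument.

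If $h>0$, the closed cylinders $\overline{V_1},\overline{V_2}$ are separated by a positive distance, so one simply takes a smooth cutoff $w$ equal to $\lambda_1$ on a neighborhood of $\overline{V_1}$ and $0$ on a neighborhood of $\overline{V_2}$, extended periodically in $x_1$; this is in $H^1_{loc}$ and lies in $\mathcal I$, and by the above $\int_C(q\cdot\tilde e)w^2=\lambda_1^2 I\neq0$, giving a strictly positive limit. Conversely, if $h=0$ the cylinders are tangent, and I would localize: pick a point $p$ on $\partial V_1\cap\partial V_2$; after translating and rescaling, $V_1$ and $V_2$ contain, near $p$, a pair of internally tangent balls exactly as in the hypotheses of Theorem~\ref{H1-and-dim} (interior balls of the two cylinders tangent at $p$), all sitting inside a small bounded neighborhood $U$. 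Any $w\in\mathcal I$ restricted to $U$ lies in $H^1(U)$ and satisfies $(C_{\lambda_1,\lambda_2})$; since $N=3\leq 3$, part (1) of Theorem~\ref{H1-and-dim} forces $\lambda_1=\lambda_2$. Hence $\lambda_1^2=\lambda_2^2$ for every first integral, so the numerator $\int_C(q\cdot\tilde e)w^2=(\lambda_1^2-\lambda_2^2)I$ vanishes identically over $\mathcal I$, the max in \eqref{large-advection} is $0$, and the limit is $0$ — which also proves the last assertion of the theorem.

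The main obstacle, and the step I'd be most careful about, is the tangent case $h=0$: I must make sure the two interior balls one extracts from $V_1$ and $V_2$ are genuinely tangent (radius exactly matched up to the affine change of variables) and that $U$ can be taken bounded and containing the convex hull of their union so that Theorem~\ref{H1-and-dim}(1) applies verbatim; the cylinders' curvature is the same so a ball of any radius $r\le R$ centered on the axis-side of the tangency works, and a small bounded $U$ around $p$ suffices since the statement is local. A secondary point to nail down cleanly is $I\neq 0$, i.e.\ that the ergodic component really advects in the $e$-direction on average — this is where the choice of the Hu--Pesin--Talitskaya flow matters and should be stated explicitly.
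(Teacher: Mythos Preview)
Your proposal is correct and follows essentially the same route as the paper: reduce via Theorem~\ref{ek1} and Remark~\ref{observation} to the question of whether some $w\in\mathcal I$ makes $\int_C(q\cdot\tilde e)w^2\neq 0$; use Theorem~\ref{main} to force $w\equiv\lambda_i$ on each ergodic cylinder $V_i$ and compute the numerator as $(\lambda_1^2-\lambda_2^2)\int_{[0,L_1]\times D_1}v\cdot\tilde e$; then construct a smooth $w_0$ with distinct values when $h>0$, and invoke Theorem~\ref{H1-and-dim}(1) to force $\lambda_1=\lambda_2$ when $h=0$. The paper's own proof is terser in two places where you are (rightly) more explicit: it simply asserts $\int_{V_1\cap C}v\cdot e\neq 0$ without comment, and it applies Theorem~\ref{H1-and-dim} to the tangent configuration without spelling out the insertion of tangent balls inside the cylinders (your localization argument, or the slicing argument the paper uses in Remark~\ref{n=4} for $N=4$, is exactly what is needed there). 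One small wording slip: ``$\lambda_1\neq\lambda_2$'' is not literally equivalent to ``$\lambda_1^2\neq\lambda_2^2$'', but your concrete choice $\lambda_2=0$, $\lambda_1\neq 0$ resolves this immediately.
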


\begin{rem} 
The number of ergodic components in the above theorem, and in Theorem \ref{ngeq5} below, does not have to be exactly two for the result to hold. This was just added to simplify the construction a vector field of zero average. The above theorem, as well as Theorem \ref{ngeq5}, hold true in the case of an incompressible flow $q$ with a countable collection of ergodic components $\{V_{i}\}_{i\in \mathbb{N}}$ provided that $q$ vanishes on the boundary of each $V_{i}$ and $\int_{C}q =0$. One can easily see from the computation in (\ref{leading}).
\end{rem}
We end this subsection by a result about the asymptotic behavior of $c^{*}_{\Omega,A, Mq, f}(e)$, in presence of large advection with ergodic components, in dimensions $N\geq 5$.  
 The difference between the case $N\geq 5$ and the case $N=3$ or $4$ is that the limit (\ref{large-advection}) will be always positive regardless of the distance $h\geq0$ between the ergodic components. That is, the limit (\ref{large-advection}) will be positive, in $N\geq 5$, even in the case where $h=0$.
\begin{thm}\label{ngeq5}
Assume that $N\geq 5$ and let $\Omega=\R^{N}$ or $\R\times\omega$ where $\omega\subseteq\R^{N-1}$ satisfies (\ref{comega}). Let $V_{1}$ and $V_{2}$ be  the cylindrical subsets of $\Omega$, as defined above, in the same direction $e=(1,0,\cdots,0)$, such that $[0,L_{1}]\times (D_{1}\cup D_{2})\subseteq C$ ($C$ is the periodicity cell of $\Omega$). Consider an $N$-dimensional incompressible flow $q$ of type (\ref{qcylinders}) which has a  zero-average over $C$ and admits two ergodic components $V_{1}$ and $V_{2}$. Consider the reaction-advection-diffusion equation (\ref{heteqM}) with this particular advection $q$, where $f$ and $A$ satisfy  (\ref{cf1}-\ref{cf2}) and (\ref{cA}) respectively. Then, the minimal KPP speed $c^{*}_{\Omega,A, Mq,f}(e)$ always satisfies
\begin{equation}\label{positivespeedupngeq5}
0<\lim_{M\rightarrow+\infty}\frac{c^{*}_{\Omega,A, Mq,f}(e)}{M}<\infty.
\end{equation} 
 \end{thm}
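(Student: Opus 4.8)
\textbf{Proof proposal for Theorem \ref{ngeq5}.}

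The plan is to reduce the statement to an application of Theorem \ref{ek1} together with the structural results of Theorems \ref{main} and \ref{H1-and-dim}. By Theorem \ref{ek1}, the limit in \eqref{positivespeedupngeq5} equals the variational quantity $\ds\max_{w\in\mathcal I_1^A}\frac{\int_C (q\cdot\tilde e) w^2}{\int_C w^2}$, so the claim $0<\lim<\infty$ is equivalent to exhibiting \emph{some} first integral $w_0\in\mathcal I$ with $\int_C (q\cdot\tilde e)w_0^2\neq 0$ (using Remark \ref{observation} to upgrade from $\mathcal I$ to $\mathcal I_1^A$ via an additive constant, which does not change $\int_C (q\cdot\tilde e)w^2$ because $q$ has zero average and is divergence-free tangent to $\partial\Omega$), while the finiteness of the limit is automatic from the Berestycki--Hamel--Nadirashvili bounds \cite{BHNadvec} (or directly from the fact that the maximand is bounded by $\|q\|_\infty$). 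So the whole content is the \emph{positivity}.

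First I would identify exactly which $w$ lie in $\mathcal I$. Since $q$ has the special form \eqref{qcylinders}, vanishing identically on $\overline{\Omega\setminus(V_1\cup V_2)}$ and coinciding (up to the reflection) with the ergodic flow $v$ on $V_1$ and $V_2$, a function $w\in H^1_{loc}$ is a first integral of $q$ iff $q\cdot\nabla w=0$ a.e.; on $\Omega\setminus(V_1\cup V_2)$ this is vacuous, and on $V_1$ (resp.\ $V_2$) it says $w$ is a first integral of the ergodic flow $v$ restricted to the ergodic component, so by Theorem \ref{main} $w$ must be a.e.\ constant on $V_1$ and a.e.\ constant on $V_2$. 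Conversely, any $w\in H^1_{loc}$ that is $L$-periodic in $x$ and takes a constant value $\lambda$ a.e.\ on $V_1$ and a constant value $\mu$ a.e.\ on $V_2$ is a first integral. The key point, and this is where Theorem \ref{H1-and-dim} enters, is that when $N\geq 5$ (equivalently $N-1\geq 4$, applying part (2) of Theorem \ref{H1-and-dim} in the cross-sectional slice $\R^{N-1}$ where $D_1$ and $D_2$ sit as balls that are tangent when $h=0$) such a $w$ with $\lambda\neq\mu$ actually exists in $H^1$ even when the two balls are tangent; for $h>0$ it trivially exists in any dimension. So in all cases $N\geq 5$, $h\geq 0$, the set $\mathcal I$ contains a genuinely two-valued first integral.

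Next I would compute $\int_C (q\cdot\tilde e)w_0^2$ for such a $w_0$ with values $\lambda$ on $V_1$ and $\mu$ on $V_2$. Writing the integral as a sum over $[0,L_1]\times D_1$, over $[0,L_1]\times D_2$, and over the complement (where $q=0$, contributing nothing), and using the reflection symmetry built into \eqref{qcylinders} — on $V_2$, $q(x_1,\dots,x_N)=-v(x_1,\dots,x_{N-1},-x_N+2R+h+a)$ — the change of variables $x_N\mapsto -x_N+2R+h+a$ maps $D_2$ to $D_1$ and turns the $V_2$-integral into $-\lambda^2\cdot 0$? No: carefully, $\int_{[0,L_1]\times D_1}(q\cdot\tilde e)\,dx$ need not vanish in general since only the \emph{total} average of $q$ over $C$ is zero, and the two slice-integrals are negatives of each other. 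Set $I:=\int_{[0,L_1]\times D_1}(v\cdot\tilde e)\,dx$; then the $V_1$ contribution is $\lambda^2 I$ and, after the reflection change of variables, the $V_2$ contribution is $\mu^2(-I')$ where $I'=I$ by the same computation that gave $\int_C q=0$. Hence $\int_C(q\cdot\tilde e)w_0^2=(\lambda^2-\mu^2)I$. Choosing $\lambda\neq\pm\mu$ (possible since $N\geq 5$ permits arbitrary $(\lambda,\mu)$), this is nonzero \emph{provided} $I\neq 0$. So the last thing to nail down is $I=\int_{[0,L_1]\times D_1}v_1\,dx\neq 0$, i.e.\ that the ergodic flow $v$ on the component $V_1$ has nonzero mean flux in the direction $e$ along which the cylinder is unbounded; this follows because an ergodic (in particular, positively recurrent on a positive-measure set that is a cylinder $\R\times D_1$) incompressible flow whose trajectories stay in a bounded cross-section must have a nonzero drift along the cylinder axis — otherwise trajectories would be bounded and could not be dense, contradicting ergodicity of a noncompact component.

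\textbf{Main obstacle.} The delicate point I expect to be hardest is precisely this last one: guaranteeing (or arranging, in the construction citing \cite{hpt2004, katok}) that the ergodic component $V_1=\R\times D_1$ carries a \emph{net nonzero flux} $I\neq 0$ in the $e$-direction, so that the computed quantity $(\lambda^2-\mu^2)I$ is genuinely nonzero. If $I$ were $0$ then \emph{every} first integral would give integral $0$ and the limit would vanish even for $N\geq 5$ — so the theorem is really asserting that the constructed $q$ can be (and is) taken with $I\neq 0$; I would make this hypothesis explicit or derive it from ergodicity on the noncompact cylinder as sketched. Everything else — the identification of $\mathcal I$ via Theorems \ref{main} and \ref{H1-and-dim}, the additive-constant trick of Remark \ref{observation} to land in $\mathcal I_1^A$, and the finiteness from \cite{BHNadvec} — is routine.
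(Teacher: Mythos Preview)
Your approach is essentially identical to the paper's: reduce via Theorem~\ref{ek1} and Remark~\ref{observation} to exhibiting a first integral $w_0\in\mathcal I$ with $\int_C(q\cdot\tilde e)w_0^2\neq 0$; invoke Theorem~\ref{main} to see any first integral is constant on each $V_i$; apply part~2 of Theorem~\ref{H1-and-dim} in the $(N-1)$-dimensional cross-section (where $N-1\geq 4$) to build $w_0\in H^1_{loc}$ with distinct constants $\lambda\neq\mu$ on $V_1,V_2$ even when $h=0$; then compute $\int_C(q\cdot\tilde e)w_0^2=(\lambda^2-\mu^2)I$ with $I=\int_{V_1\cap C}v\cdot e$. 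The paper does precisely this, with the same computation and the same appeal to Theorem~\ref{H1-and-dim}.

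On your flagged obstacle $I\neq 0$: the paper simply \emph{asserts} it (both in the proof of Theorem~\ref{easyergodic} and here), treating it as part of the construction of the flow $v$ from \cite{hpt2004} rather than as a consequence of ergodicity. Your heuristic --- that ergodicity on the noncompact cylinder forces nonzero axial drift because otherwise trajectories would be bounded --- does not work as stated: the flow $v$ is $L_1$-periodic in $x_1$, so the relevant dynamical system lives on the compact quotient $(\R/L_1\Z)\times D_1$, and ergodicity there imposes no constraint on the sign or nonvanishing of $\int v_1$. So you are right that $I\neq 0$ is a genuine hypothesis that must be built into the construction of $v$; you are just wrong that it follows from ergodicity alone. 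The paper is equally silent on this point, so your proof is no less complete than the original.
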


\subsection{A comparison between the influence of flows with ergodic components to that of shear flows}
We will compare the above results of Theorem \ref{easyergodic}, where the advection $q$ admits ergodic components, to the case of shear flows which always lead to a linear speed up of the KPP minimal speed. In the 3 dimensional setting, unlike the flows with ergodic components (see Theorem \ref{easyergodic} above), and with a remarkable contrast, the limit $\lim_{M\rightarrow+\infty}c^{*}_{Mq}(e)/M $ is always positive \emph{when  $q$ is a shear flow in the direction of $e$}. This is precisely Theorem \ref{shearthm} which appeared in  Berestycki \cite{berestycki} and Heinze \cite{heinze}. For the reader's convenience, we will present a short proof of this theorem by using our result in \cite{ek1} or the one in Zlato\v{s} \cite{zlatosARMA}.

\begin{thm}[\cite{berestycki} and \cite{heinze}]\label{shearthm}
Let $N\geq 2$ and assume that the domain has the form $\Omega:=\mathbb{R}\times \omega$ where $\omega\subseteq \R^{d}\times \R^{N-1-d}\subseteq \R^{N-1}$ satisfies (\ref{comega}), or $\omega$ is a bounded smooth open subset of $\R^{N-1}$ (in which case $d=0$), that the direction of propagation is $e=(1,0,\cdots,0)\in  \mathbb{R}^{N}$ and let $q$ be a shear flow of the form 
\begin{equation}\label{shearflow}
q(x):=(q_{1}(x_{2},\cdots,x_{N}),0,\cdots,0)
\end{equation} 
with $q_{1}\in C^{1,\delta}(\overline{\Omega})$, $q_{1}\not \equiv 0$ and $\int_{C}q_{1}=0$. Then,
\begin{equation}\label{shearedspeed}
\lim_{M\rightarrow+\infty}\frac{c^{*}_{\Omega, A, Mq, f}(e)}{M}=\max_{\psi\in \mathcal{J}}\frac{\int_{C}q_{1}(x_2,\cdots,x_{N})\psi^{2}}{\int_{C}(\psi(x_{2},\cdots,x_{N}))^{2}dx}>0,
\end{equation} 
where $$\mathcal{J}:=\{\psi=\psi(x_{2},\cdots,x_{N})\in H^{1}_{loc}(\omega), ~\psi \text{ is periodic in the unbounded directions of }\omega \}$$
\end{thm}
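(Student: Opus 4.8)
\medskip
\noindent\textbf{Proof strategy.} The plan is to read the formula off Theorem~\ref{ek1} (equivalently Zlato\v{s}~\cite{zlatosARMA}) by specializing to the shear structure. Since $e=(1,0,\cdots,0)$ and $q=(q_1(x_2,\cdots,x_N),0,\cdots,0)$ one has $q\cdot\tilde{e}=q_1$, a function of the transverse variables $(x_2,\cdots,x_N)$ only, so Theorem~\ref{ek1} immediately gives
$$\lim_{M\to+\infty}\frac{c^{*}_{\Omega,A,Mq,f}(e)}{M}=\max_{w\in\mathcal{I}_1^A}\frac{\int_C q_1\,w^2}{\int_C w^2}.$$
Two points remain: (i) identify this maximum with the one over $\mathcal{J}$ appearing in (\ref{shearedspeed}); (ii) prove it is strictly positive.

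For (i), one inequality is free: every $\psi\in\mathcal{J}$, regarded as an $x_1$-independent function on $\Omega$, satisfies $q\cdot\nabla\psi=q_1\partial_{x_1}\psi=0$, hence $\mathcal{J}\subseteq\mathcal{I}$, so the relevant subclass of $\mathcal{J}$ is contained in $\mathcal{I}_1^A$ and its Rayleigh quotient is dominated. For the reverse inequality I would exploit that any $w\in\mathcal{I}$ satisfies $q_1\partial_{x_1}w=0$ a.e., i.e.\ $w$ is $x_1$-independent on $\{q_1\neq0\}$; replacing $w$ by its period-average $\overline{w}(x_2,\cdots,x_N):=\frac{1}{L_1}\int_0^{L_1}w(x_1,\cdot)\,dx_1\in\mathcal{J}$ leaves the numerator unchanged, because $q_1w^2=q_1\overline{w}^2$ a.e.\ (both vanish on $\{q_1=0\}$ and coincide elsewhere), while by Cauchy--Schwarz $\int_C\overline{w}^2\le\int_C w^2$ and a parallel convexity estimate keeps $\overline{w}$ inside the constraint $\int_C\zeta\,(\cdot)^2\ge\int_C\nabla(\cdot)\cdot A\nabla(\cdot)$; hence $\overline{w}$ is an admissible competitor with an at least as large quotient (whenever the numerator is $\ge0$), and the two maxima agree.

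For (ii), note that $\int_C q_1=0$ together with $q_1\not\equiv0$ and the continuity of $q_1$ force $q_1>0$ on a nonempty open subset $U_0$ of $\Omega$. Picking $0\le\chi\in C^\infty_c(U_0)$, $\chi\not\equiv0$, the function $\psi_\varepsilon:=1+\varepsilon\chi$ belongs to $\mathcal{J}$, lies in $\mathcal{I}_1^A$ for $\varepsilon>0$ small (its Dirichlet energy is $O(\varepsilon^2)$ while $\int_C\zeta\psi_\varepsilon^2\to\int_C\zeta>0$), and, using $\int_C q_1=0$, satisfies $\int_C q_1\psi_\varepsilon^2=2\varepsilon\int_C q_1\chi+O(\varepsilon^2)>0$; since $\int_C\psi_\varepsilon^2>0$ this yields a strictly positive competitor, which is exactly the mechanism recorded in Remark~\ref{observation}. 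The step I expect to require the most care is the averaging reduction in (i): in particular the case where $\{q_1=0\}$ has positive Lebesgue measure, so that $\mathcal{I}$ is strictly larger than $\mathcal{J}$, and the verification that passing to the $x_1$-mean does not destroy the $\mathcal{I}_1^A$ constraint when $A$ is a genuine variable matrix with nonzero coupling to the $x_1$ direction; everything else is routine bookkeeping.
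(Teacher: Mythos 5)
Your proposal follows essentially the same route as the paper's proof: specialize the limit formula of Theorem \ref{ek1} to the shear structure, identify first integrals with functions of the transverse variables (where the paper simply asserts $\mathcal{I}=\mathcal{J}$, you treat the set $\{q_1=0\}$ by $x_1$-averaging, which is a more careful rendering of the same step), and obtain positivity from a nonnegative competitor supported in an open set where $q_1>0$, whose existence follows from $q_1\not\equiv 0$ and $\int_C q_1=0$ (the paper uses a compactly supported $\alpha$ together with Remark \ref{observation}; you use $1+\varepsilon\chi$ placed directly in $\mathcal{I}_1^A$). The only place you are no more detailed than the paper is the passage from the constrained maximum over $\mathcal{I}_1^A$ to the unconstrained one over $\mathcal{J}$, which the paper handles just as briefly by citing Remark \ref{observation}, so your attempt is faithful to the paper's argument, including at its tersest step.
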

 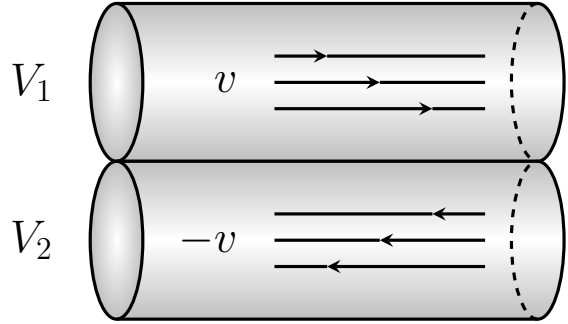
\begin{figure}[H]
\centering
\begin{center}
\begin{tikzpicture}[scale=0.7]
\shade[top color=gray!50, bottom color=white] (.5,1.5) arc (0:90:.5cm and 1.5cm) -- (8,3) arc (90:0:.5cm and 1.5cm) --cycle;
\shade[top color=white, bottom color=gray!50] (.5,1.5) arc (0:-90:.5cm and 1.5cm) -- (8,0) arc (-90:0:.5cm and 1.5cm) --cycle;
\shade[draw,outer color=gray!50, inner color=white,very thick] (0,1.5) ellipse  (.5cm and 1.5cm);
\draw[very thick] (0,3)--(8,3) arc (90:-90:.5cm and 1.5cm) -- (0,0);
\shade[top color=gray!50, bottom color=white] (.5,-1.5) arc (0:90:.5cm and 1.5cm) -- (8,0) arc (90:0:.5cm and 1.5cm) --cycle;
\shade[top color=white, bottom color=gray!50] (.5,-1.5) arc (0:-90:.5cm and 1.5cm) -- (8,-3) arc (-90:0:.5cm and 1.5cm) --cycle;
\shade[draw,outer color=gray!50, inner color=white,very thick] (0,-1.5) ellipse  (.5cm and 1.5cm);
\draw[very thick] (0,0)--(8,0) arc (90:-90:.5cm and 1.5cm) -- (0,-3);
\draw [very thick, dashed] (8,3) arc (90:270:.5cm and 1.5cm);
\draw [very thick, dashed] (8,-3) arc (-90:-270:.5cm and 1.5cm);
\draw (2.5,1.5) node[left]{\Large $v$};
\draw (2.5,-1.5) node[left]{\Large $-v$};
\draw (-1,1.5) node[left]{\Large $V_1$};
\draw (-1,-1.5) node[left]{\Large $V_2$};
\draw [>=stealth, ->,very thick] (3,2)--(4,2);
\draw [very thick] (4,2)--(7,2);
\draw [>=stealth, ->,very thick] (3,1.5)--(5,1.5);
\draw [very thick] (5,1.5)--(7,1.5);
\draw [>=stealth, ->,very thick] (3,1)--(6,1);
\draw [very thick] (6,1)--(7,1);
\draw [>=stealth, ->,very thick] (7,-2)--(4,-2);
\draw [very thick] (4,-2)--(3,-2);
\draw [>=stealth, ->,very thick] (7,-1.5)--(5,-1.5);
\draw [very thick] (5,-1.5)--(3,-1.5);
\draw [>=stealth, ->,very thick] (7,-1)--(6,-1);
\draw [very thick] (6,-1)--(3,-1);
\end{tikzpicture}
\end{center}
\caption{\footnotesize{A shear flow, over one periodicity cell $C$, having two cylindrical components aligned in the same direction and tangent to each other. Over $\overline{V_{1}}\cap \overline{V_{2}},~q=0$.}}
\label{shearfigure}
\end{figure}
\begin{rem}It is important to notice the remarkable difference between influence of shear flows and the ergodic ones on the KPP speed. In Theorem \ref{shearthm}, $N=3$, we see that shear flows make the limit (\ref{shearedspeed}) positive regardless of the configuration of the flow over its components. To be more precise, the shear flow $q$ could have, over one periodicity cell, two components $V_{1}$ and $V_{2}$ which are tangent to each other (as in Figure \ref{shearfigure})), where $q$ vanishes on their boundaries $\partial V_{1}$ and $\partial V_{2}$, and yet, the speed up of the KPP pulsating fronts stays linear (i.e. (\ref{shearedspeed}) holds true). While, as  we saw in Theorem \ref{easyergodic} above, this cannot be the case for flows with ergodic components, when $N=3$ (the speed up is linear with respect to the amplitude of $q$ if and only if the ergodic components are at a distance from each other). 
\end{rem}


\section{Proofs}\label{proofs}
In this section, we prove Lemma \ref{ergodic-lemma} and then the main results: Theorem \ref{main}, Theorem \ref{H1-and-dim}, Theorem \ref{easyergodic} and Theorem \ref{ngeq5}. We also give a short proof of Theorem \ref{shearthm} which we reviewed in this work to show the contrast between ergodic flows and shear flows in dimensions $N\geq 3$.

\subsubsection*{Proof of Lemma \ref{ergodic-lemma}}
 Let $v \in C^\infty_b(\Omega)$ (smooth and bounded function), $I$ be a measurable subset of $\mathbb{R}$ and let $\Phi$ denote the flow associated to $q$. Then, for all $t$,
\begin{eqnarray}\label{6thineq}
\nonumber \int_{\Phi_t(v^{-1}(I))} |v(x)-v(\Phi_{-t}(x))|dx  & = & \int_{\Phi_t(v^{-1}(I))} \left|\int_{-t}^0 \dfrac{\partial }{\partial s}
\left(v(\Phi_s(x))\right) ds\right|dx  \vspace{5 pt}\\
\nonumber & \leq & \int_{\Phi_t(v^{-1}(I))} \left|\int_{-t}^0 \left|\dfrac{\partial }{\partial s}
\left(v(\Phi_s(x))\right)\right| ds\right|dx \vspace{5 pt}\\
\nonumber& \leq & \int_{\Phi_t(v^{-1}(I))} \int_{[-t,0]} \left|q(\Phi_s(x)) \cdot \nabla v(\Phi_s(x))\right| dsdx \vspace{5 pt}\\
\nonumber (\hbox{by Fubini's theorem})& \leq &   \int_{[-t,0]} \int_{\Phi_t(v^{-1}(I))} \left|q(\Phi_s(x)) \cdot \nabla v(\Phi_s(x))\right| dxds \vspace{5 pt}\\
\nonumber & \leq & \int_{[-t,0]} \int_{\Omega} \left|q(\Phi_s(x)) \cdot \nabla v(\Phi_s(x))\right| dxds\\  \vspace{5 pt}&
 \leq & \int_{-t}^0\int_\Omega \left|q(y) \cdot \nabla v(y)\right| dy ds. 
\end{eqnarray}
In (\ref{6thineq}), we used the change of variable $y=\Phi_s(x)=\Phi(s,x)$, and since $q$ is incompressible, the Jacobian $J(s,x):=\det \left[\nabla_{x} \Phi(s,x)\right]$ of this change of variable is equal to 1 (at any $s$ and any $x\in\Omega$). We refer the reader to Proposition 1.3 and Proposition 1.4 in Majda and Bertozzi \cite{BertozziMajda} for a proof of this fact.\footnote{For a fixed $x$, the Jacobian $J(x,t):=\det \left[\nabla_{x}\Phi(t,x)\right]$ satisfies
the differential equation $\ds{\frac{\partial J}{\partial t}(x,t)=(\nabla_{x}\cdot  q)}\ds|_{_{\ds\Phi(t,x)}} J(x,t)$ with the initial condition $J(x,0)=\ds{ \det\left[\frac{\partial \Phi_{0}}{\partial x}\right]}=\ds{\det\left[\ds{Id_{\ds M_{N}(\R)}}\right]}=1$ (recall that $\Phi_{0}(z)=z$ for all $z\in\Omega$). As $\nabla\cdot q\equiv0,$  it follows that $\partial_{t}J=0$ and hence $J(x,\cdot)$ is constant as a function of $t\in\R$ and it must be equal to $J(x,0)=1$.} So far, we have
\begin{eqnarray}\label{conservation-estimate}
\nonumber \forall  v\in C_{b}^{\infty}(\Omega),\\
 \ds \int_{\Phi_t(v^{-1}(I))} \left|v(x)-v(\Phi_{-t}(x))\right|dx & \leq & |t|\int_\Omega \left|q(y) \cdot \nabla v(y)\right| dx .
\end{eqnarray}
By density,  inequality (\ref{conservation-estimate}) remains true for all $v \in H_{loc}^1(\Omega)$. In particular, if $w$ is a first integral of $q$ (see Definition \ref{first-integral} above), (\ref{conservation-estimate}) becomes
\begin{equation} \label{eq01}
\forall t\in\R, \ \int_{ \Phi_t(w^{-1}(I))}  \left|w(x)-w(\Phi_{-t}(x))\right|dx = 0.
\end{equation}
Moreover, if $x \in \Phi_t(w^{-1}(I))$,  $\Phi_{-t}(x) \in w^{-1}(I)$; thus, $w(\Phi_{-t}(x)) \in I$.
From (\ref{eq01}) we have, for almost every $z \in \Phi_t(w^{-1}(I))$, $w(z) =w(\Phi_{-t}(z))$. 
This yields that, for almost every $z \in w^{-1}(I)$, $\Phi_t(z) \in w^{-1}(I)$ for all $t\in \mathbb{R}$. This allows us to conclude that, up to a set of measure 0,
$$
\forall t\in\R,\ \Phi_t(w^{-1}(I)) \subseteq w^{-1}(I).
$$
The previous inclusion and the fact that the flow is measure preserving (as explained above in the case of incompressible fields)
lead us to conclude that, up to a set of measure $0$,
$\Phi_t(w^{-1}(I)) = w^{-1}(I).$
In other words, $$\mathcal{L}^{N}\left(\Phi_t(w^{-1}(I)) \Delta (w^{-1}(I))\right)=0,\ \hbox{ for all }t\in\R,$$ and this completes the proof of Lemma \ref{ergodic-lemma}. \hfill $\Box$
\vskip 0.25cm

\noindent We are now in the position to prove Theorem \ref{main}.

\subsubsection*{Proof of Theorem \ref{main}}
Let $V$ be an ergodic component of the flow, and $w$ a first integral of $q$. We consider the following function:
\begin{eqnarray*}
f:\mathbb{R} & \longrightarrow & \mathbb{R} \\
t & \longmapsto & \mathcal{L}^N\left(w^{-1}((-\infty,t])\cap V\right)
\end{eqnarray*}
$f$ is an upper semi-continuous increasing function which satisfies
$$
\lim_{t\rightarrow-\infty} f(t) = 0 \qquad \text{ and } \qquad \lim_{t\rightarrow+\infty} f(t) = \mathcal{L}^N(V).
$$
We assume to the contrary that $f(\mathbb{R}) \neq \{0,\mathcal{L}^N(V)\}$. We can then pick $\lambda \in f(\mathbb{R})\backslash \{0,\mathcal{L}^N(V)\}$ and $t_0$ such that $f(t_0)=\lambda$. We set $W=w^{-1}((-\infty,t_0])\cap V$. By definition of $f$ and $\lambda$, we get
$$
0<\mathcal{L}^N(W)=\lambda<\mathcal{L}^N(V).
$$
Moreover, $W$ is the intersection of $V$ (which is stable by the flow) with $w^{-1}((-\infty,t_0])$ which, by Lemma 1, is also stable by the flow, up to a set of measure 0. Hence, $W$ is stable by the flow up to a set of measure 0. However, this  contradicts the ergodicity of $V$. We can now conclude that $f(\mathbb{R})=\{0,\mathcal{L}^N(V)\}.$ Let $$\alpha = \inf\{t \in \mathbb{R}, \ f(t)=\mathcal{L}^N(V) \}.$$ One can see that, up to a set of measure 0, $V=V\cap w^{-1}(\{\alpha\})$, and therefore,
$w(x)=\alpha$, for almost every $x \in V$. This completes the proof of the theorem.\hfill $\Box$

\subsubsection*{Proof of Theorem \ref{H1-and-dim}} 

1. We assume to the contrary that there exists $(\lambda,\mu)\in \mathbb{R}^2$ with $\lambda\neq \mu$ and $u \in H^1(U)$ verifying condition $(C_{\lambda,\mu})$.
Let $\rho \in C^{\infty}_c(\mathbb{R}^N)$, such that $\rho \geq 0$, $\rho$ has support in $D^N$ the open unit ball in $\mathbb{R}^N$  and $\int_{\mathbb{R}^N}\rho=1$.\\
For $n \in \mathbb{N}^*$, we set $\rho_n:x \mapsto n\rho\left( nx \right)$. $\rho_n$ is a mollifier with support in the open ball of center $0$ and radius $\dfrac{1}{n}$.\\
Let $$U_n=\left\{x \in \mathbb{R}^N, \ dist(x,U)\leq \dfrac{1}{n} \right\}.$$

Let $u_n = u\star \rho_n$, where we extend $u$ by $0$ outside $U$. Then, $u_n \in C^{\infty}_c(U_n)$ and
$$
\ds{u_n|\ds{_{_U}}} \xrightarrow[n\to +\infty]{H^1(U)} u.
$$
We denote by $B_{1,n}$ the ball of $\mathbb{R}^N$ of center $(0,...,0,1)$ and radius $1-1/n$ and by $B_{2,n}$ the ball of $\mathbb{R}^N$ of center $(0,...,0,-1)$ and radius $1-{1}/{n}$. Then, $u_n|_{B_{1,n}}=\lambda$ and $u_n|_{B_{2,n}}=\mu$ (see Figure \ref{fig1} in the case $N=3$.)

We now use cylindrical coordinates in $\mathbb{R}^N$. 
That is, if  $(x_1,...,x_N)$ are the cartesian coordinates of $x \in \mathbb{R}^N$, we denote by $(r,\omega,x_N)$ the cylindrical coordinates of $x$, where $r^2=x_1^2+...+x_{N-1}^2$ and $\omega \in S^{N-2}$, the $(N-2)$-dimensional sphere embedded in the plane of equation $x_N=0$, such that
$
(x_1,\cdots, x_N)=(r\omega, x_N). 
$
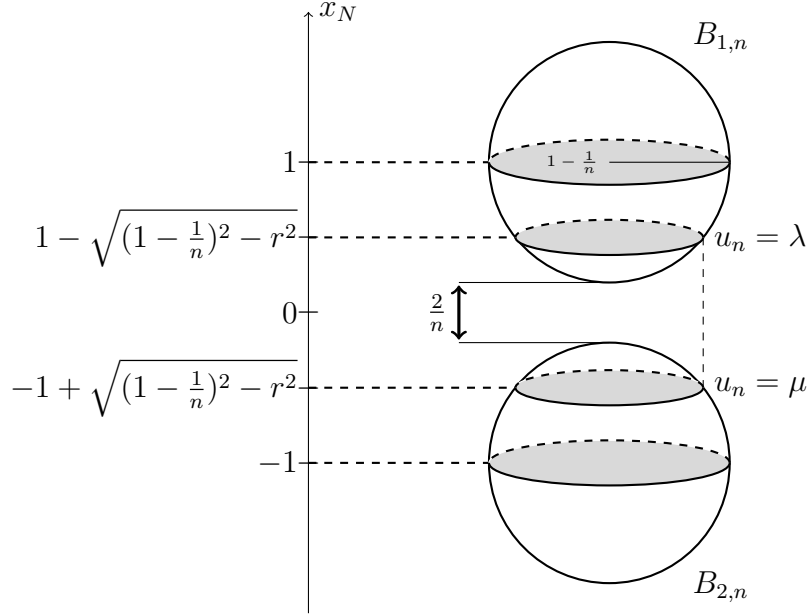
\begin{figure}[h!]
  \centering
 \begin{center}
\begin{tikzpicture}[scale=2]
\draw[->] (0,-2)--(0,2) node[right]{$x_N$};
\draw (0,0) node{$-$} node[left]{$0$};
\draw (0,1) node{$-$} node[left]{$1$};
\draw (0,-1) node{$-$} node[left]{$-1$};
\draw (0,.5) node{$-$} node[left]{$1-\sqrt{(1-\frac{1}{n})^2-r^2}$};
\draw (0,-.5) node{$-$} node[left]{$-1+\sqrt{(1-\frac{1}{n})^2-r^2}$};
\draw[thick] (2,1) circle (.8);
\draw[thick, dashed] (0,1)--(1.2,1);
\draw[thick, dashed] (0,-1)--(1.2,-1);
\draw[thick] (2,-1) circle (.8);
\draw[thick, dashed] (0,.5)--(1.376,.5);
\draw[thick, dashed] (0,-.5)--(1.376,-.5);
\draw (3,2) node[below left]{$B_{1,n}$};
\draw (3,-2) node[above left]{$B_{2,n}$};
\draw[dashed] (2.624,.5) node[right]{$u_n=\lambda$} -- (2.624,-.5) node[right]{$u_n=\mu$};
\draw (2,.2)--(1,.2);
\draw (2,-.2)--(1,-.2);
\draw[very thick,<->] (1,-.18)--(1,.18) node[midway,left]{$\frac{2}{n}$};
\fill[fill=gray!30,draw=white] (2,1) ellipse (.8cm and .15cm);
\fill[fill=gray!30,draw=white] (2,.5) ellipse (.624cm and .117cm);
\fill[fill=gray!30,draw=white] (2,-1) ellipse (.8cm and .15cm);
\fill[fill=gray!30,draw=white] (2,-.5) ellipse (.624cm and .117cm);
\draw (2,1) node[left]{\tiny $1-\frac{1}{n}$}--(2.8,1);
\draw[thick] (1.2,1) arc (180:360:.8 and .15);
\draw[thick,dashed] (2.8,1) arc (0:180:.8 and .15);
\draw[thick] (1.2,-1) arc (180:360:.8 and .15);
\draw[thick,dashed] (2.8,-1) arc (0:180:.8 and .15);
\draw[thick] (1.376,.5) arc (180:360:.624 and .117);
\draw[thick, dashed] (2.624,.5) arc (0:180:.624 and .117);
\draw[thick] (1.376,-.5) arc (180:360:.624 and .117);
\draw[thick, dashed] (2.624,-.5) arc (0:180:.624 and .117);
\end{tikzpicture}
\end{center}
  \caption{\footnotesize{$N=3$, after mollifying the function $u$, we have $u_{n}|_{\overline{B_{1,n}}}=\lambda$ and $u_{n}|_{\overline{B_{2,n}}}=\mu$.}}
  \label{fig1}
\end{figure}

We have
\begin{eqnarray*}
\lambda-\mu & = & u_n\left(r,\omega,1-\sqrt{\left(1-\frac{1}{n}\right)^2-r^2}\right)-u_n\left(r,\omega,-1+\sqrt{\left(1-\frac{1}{n}\right)^2-r^2}\right). 
\end{eqnarray*}

Thus, by integrating $u_n$ along a vertical path from the boundary of $B_{1,n}$ to the boundary of $B_{2,n}$, we obtain
\begin{eqnarray*}
\lambda-\mu & = & \int_{-1+\sqrt{\left(1-\frac{1}{n}\right)^2-r^2}}^{1-\sqrt{\left(1-\frac{1}{n}\right)^2-r^2}} \dfrac{\partial u_n}{\partial x_N}(r,\omega,s)ds.
\end{eqnarray*}
Hence, by Cauchy-Schwarz inequality, we have 
\begin{equation}\label{eq1}
|\lambda-\mu| \leq \sqrt{2\left(1-\sqrt{\left(1-\frac{1}{n}\right)^2-r^2}\right)} \left(\int_{-1+\sqrt{\left(1-\frac{1}{n}\right)^2-r^2}}^{1-\sqrt{\left(1-\frac{1}{n}\right)^2-r^2}} \left(\dfrac{\partial u_n}{\partial x_N}(r,\omega,s)\right)^2 ds\right)^{\frac{1}{2}}. \\
\end{equation}
This yields that
$$
\int_{-1+\sqrt{\left(1-\frac{1}{n}\right)^2-r^2}}^{1-\sqrt{\left(1-\frac{1}{n}\right)^2-r^2}} \left(\dfrac{\partial u_n}{\partial x_N}(r,\omega,s)\right)^2 ds \geq \dfrac{(\lambda-\mu)^2}{2\left(1-\sqrt{\left(1-\frac{1}{n}\right)^2-r^2}\right)}.
$$
When we  integrate $\left( \dfrac{\partial u_n}{\partial x_N}\right)^2$, which is 0 on $B_{1,n}$ and $B_{2,n}$, over the set
$$
A_n=\left\{(r,\omega,x_N) \in U, \ 0\leq r\leq 1-\frac{1}{n}, \ -1\leq x_N \leq 1\right\},
$$
we get
\begin{eqnarray*}
\begin{array}{ll}
 &\ds{\iiint_{A_n} \left( \dfrac{\partial u_n}{\partial x_N}\right)^2} \vspace{4pt} \\ 
&=\ds{\int_0^{1-\frac{1}{n}}\left(\int_{S^{n-2}}\left(\int_{-1+\sqrt{(1-\frac{1}{n})^2-r^2}}^{1-\sqrt{\left(1-\frac{1}{n}\right)^2 -r^2}} \left(\dfrac{\partial u_n}{\partial x_N}(r,\omega,s)\right)^2 ds\right)d\omega \right)dr }\vspace{4pt}\\
 &\geq  \ds{ \left|S^{N-2}\right|(\lambda-\mu)^2\int_0^{1-\frac{1}{n}} \dfrac{r^{N-2}}{2\left(1-\sqrt{(1-\frac{1}{n}^2)-r^2}\right)}dr,}
 \end{array}
\end{eqnarray*}
(where we used the notation $\left|S^{N-2}\right|:=\mathcal{L}^{N-2}(S^{N-2})$
for the $N-2$ dimensional Lebesgue measure of $S^{N-2}$).
Moreover, as $\sqrt{x} \geq x$ for all $x \in [0,1]$, we can write
\begin{eqnarray*}
\sqrt{\left(1-\frac{1}{n}\right)^2-r^2} & = & \left(1-\frac{1}{n}\right)\sqrt{1-\left(\frac{nr}{n-1}\right)^2} \\
& \geq & \left(1-\frac{1}{n}\right)\left(1-\left(\frac{nr}{n-1}\right)^2\right) \\
& \geq & \left(1-\frac{1}{n}\right) -\dfrac{nr^2}{n-1}.
\end{eqnarray*}
Finally,
\begin{eqnarray*}
\dfrac{r^{N-2}}{2\left(1-\sqrt{(1-\frac{1}{n}^2)-r^2}\right)} & \geq & 
\dfrac{r^{N-2}}{2\left(1-\left(1-\frac{1}{n}-\frac{nr^2}{n-1}\right)\right)} \\
& \geq & \ds{\dfrac{1}{2}\times\ds\dfrac{r^{N-2}}{\ds{\frac{1}{n}+\frac{nr^2}{n-1}}}}.
\end{eqnarray*}
In the case $N=2$, we obtain
\begin{equation}\label{eq2}
\begin{array}{lll}
\ds{\int_0^{1-\frac{1}{n}}\dfrac{1}{2}\dfrac{r^{N-2}}{\frac{1}{n}+\frac{nr^2}{n-1}} dr} & = &\ds{\int_0^{1-\frac{1}{n}}\dfrac{1}{2}\dfrac{dr}{\frac{1}{n}+\frac{nr^2}{n-1}}} \\
& = &\ds{ \frac{1}{2}\sqrt{n-1}\left[\arctan\left(\dfrac{nr}{\sqrt{n-1}} \right)\right]_0^{1-\frac{1}{n}}} \\
& \ds{\underset{n\to +\infty}{\sim}} &\ds{ \dfrac{\pi \sqrt{n}}{4}}
\end{array}
\end{equation}
and, in the case $N=3$,
\begin{equation}\label{eq3}
\begin{array}{lll}
\ds{\int_0^{1-\frac{1}{n}}\dfrac{1}{2}\dfrac{r^{N-2}}{\frac{1}{n}+\frac{nr^2}{n-1}} dr} & = &\ds{\int_0^{1-\frac{1}{n}}\dfrac{1}{2}\dfrac{rdr}{\frac{1}{n}+\frac{nr^2}{n-1}}} \\
& = & \dfrac{n-1}{4n}\left[\log\left(\frac{1}{n}+\frac{nr^2}{n-1} \right) \right]_0^{1-\frac{1}{n}}  \\
& \underset{n\to +\infty}{\sim} & \dfrac{\log n}{4}.
\end{array}
\end{equation}
In both cases, we end up with
$$\iiint_U |\nabla u_n |^2 \geq \iiint_{A_n} \left(\dfrac{\partial u_n}{\partial x_N} \right)^2 \geq \left | S^{N-2}\right|(\lambda-\mu)^2 \int_0^{1-\frac{1}{n}}\dfrac{1}{2}\left(\dfrac{r^{N-2}}{\frac{1}{n}+\frac{nr^2}{n-1}}\right) dr,$$
and the right-hand side tends to $+\infty$ as $n$ tends to $+\infty$ if $\lambda\neq \mu$ because of (\ref{eq2}) and (\ref{eq3}).
\vskip 0.3 cm

2. In the case $N \geq 4$, let us consider the function $u \in H^1(U)$ verifying condition ($C_{\lambda,\mu}$) for which we have equality in the Cauchy-Schwarz inequality (\ref{eq1}), in the limit $n \to +\infty$. Over the set
$$A=\left\{(r,\omega,x_N), \ 0\leq r\leq 1, \ -1\leq x_N \leq 1, \ r^2+(x_N-1)^2\geq 1, \ r^2+(x_N+1)^2\geq 1 \right\}.$$
This function is of the form
$$
u(r,\omega,x_N)=\dfrac{\lambda+\mu}{2} + \dfrac{\lambda-\mu}{2(1-\sqrt{1-r^2})}x_N.
$$
For such a function, $\dfrac{\partial u}{\partial x_N}$ is square integrable on $A$. Indeed, the previous inequalities are equalities, and we have
$$
\iiint_A \left( \dfrac{\partial u}{\partial x_N}\right)^2 = \left|S^{N-2}\right| (\lambda-\mu)^2\int_0^1 \dfrac{r^{N-2}}{2(1-\sqrt{1-r^2})}dr 
$$
and $\dfrac{r^{N-2}}{2(1-\sqrt{1-r^2})} \underset{r\to 0}{\sim} r^{N-4}$ which is integrable in the neighborhood of $0$ if $N\geq 4$.\\
Moreover, as $u$ is independent of $\omega$, we are left to prove that $\dfrac{\partial u}{\partial r}$ is square integrable on $A$. Indeed,
$$
\dfrac{\partial u }{\partial r} = \dfrac{\lambda -\mu}{2}x_N \times \dfrac{r}{\sqrt{1-r^2}(1-\sqrt{1-r^2})^2}.
$$
Thus,
\begin{eqnarray*}
&&\iiint_A \left( \dfrac{\partial u}{\partial r} \right)^2 \\
 & = & \int_0^1\int_{S^{N-2}}\int_{-1+\sqrt{1-r^2}}^{1-\sqrt{1-r^2}} \dfrac{(\lambda -\mu)^2}{4}s^2 \times \dfrac{r^2}{(1-r^2)(1-\sqrt{1-r^2})^4} r^{N-2}ds d\omega dr \\
& = &\left|S^{N-2})\right|\dfrac{(\lambda -\mu)^2}{6}\int_0^1(1-\sqrt{1-r^2})^3\times \dfrac{r^N}{(1-r^2)(1-\sqrt{1-r^2})^4} dr \\
& = & \left|S^{N-2})\right|\dfrac{(\lambda -\mu)^2}{6} \int_0^1 \dfrac{r^N}{(1-r^2)(1-\sqrt{1-r^2})} dr.
\end{eqnarray*}
We notice here that $\dfrac{r^N}{(1-r^2)(1-\sqrt{1-r^2})} \underset{r\to 0}{\sim} 2r^{N-2}$. Since $N\geq 4,$ it is integrable in the neighborhood of $0$.
Therefore, $\iiint_A \left|\nabla u \right|^2<\infty$, and we can now easily extend $u$ on $U$ in order to have $u \in H^1(U)$. \hfill $\Box$
\subsubsection*{Proof of Theorem \ref{easyergodic}}
We know, from (\ref{large-advection}), that $\ds{\lim_{M\rightarrow+\infty}\frac{c^{*}_{\Omega,A, Mq,f}(e)}{M}=\max_{w\in\mathcal{I}_{1}^{A}}\frac{\ds\int_C(q\cdot\tilde{e})\,w^2}{\ds\int_C w^2}}.$ 
From Remark \ref{observation}, this limit is strictly positive whenever there exists a first integral $w_{0}\in \mathcal {I}$ (not necessarily in $\mathcal{I}_{1}^{A}$) such that $\int_{C}q\cdot e w_{0}^{2}\neq0.$ As $q$ has two ergodic components $V_{1}$ and $V_{2}$, it follows from Theorem \ref{main} above that any first integral $w\in \mathcal{I}$ must be constant almost everywhere on $V_{1}$ and $V_{2}$. 

First,  let us assume that $h>0$,  i.e. there is a gap between the cylindrical ergodic components $V_{1}$ and $V_{2}$. We can then find $w_{0}$ which is a smooth periodic function over $\Omega$ such that $w_{0}=\lambda$ over $V_{1}$ and $w_{0}=\mu$ over $V_{2}$ for some $\lambda\neq\mu$.  As $q\equiv0$ on $\Omega \setminus (V_{1}\cup V_{2})$, we then get  $w_{0}\in\mathcal {I}$. Moreover, $w_{0}$ satisfies
\begin{equation}\label{leading}
\begin{array}{ll}
\ds\int_C(q\cdot e)w_{0}^2&=\ds{\int_{C\cap V_{1}}(q\cdot e)w_{0}^2+\int_{C\cap V_{2}}(q\cdot e)w_{0}^2}\vspace {5pt} \\
&=\ds{\lambda^{2}\int_{[0,L_{1}]\times D_{1}}q(x)\cdot e~ dx + \mu^{2}\int_{[0,L_{1}]\times D_{2}}q(x)\cdot e~ dx}\vspace {5pt}\\
& =\ds{(\lambda^{2}-\mu^{2})\int_{[0,L_{1}]\times D_{1}}q(x)\cdot e}.
\end{array}
\end{equation}
The last line follows from (\ref{qcylinders}) and a change of variables between $D_{1}$ and $D_{2}$. Having $\lambda\neq \mu$ and $$\int_{[0,L_{1}]\times D_{1}}q(x)\cdot e=\int_{V_{1}\cap C}q\cdot e=\int_{V_{1}\cap C}v(x)\cdot e \neq 0,$$ we get $\int_C(q\cdot e)w_{0}^2\neq 0$ and this finishes the proof of the sufficient condition. 

Let us now turn to the proof of  the other direction of the theorem. We assume that $h=0$ and we let $w\in\mathcal I$ be any first integral of $q$. Using Theorem \ref{main} and the assumption that $V_{1}$ and $V_{2}$ are ergodic components of $q$,  we know that $w=\lambda$ a.e. on $V_{1}$ and $w=\mu$ a.e. on $V_{2}$ for some constants $(\lambda,\mu)\in\R^{2}$. On the other hand, a first integral $w\in\mathcal{I}$ must be at least $H^{1}_{loc}(\Omega)$. Having $N=3$  together with the fact that $h=0$, Theorem \ref{H1-and-dim} then yields that $\lambda=\mu$. Doing the same computation (\ref{leading}) above, with $w_{0}$ replaced by the present $w$,  we get 
$\int_{C}q\cdot e w^{2}=0.$ This holds for any arbitrarily chosen $w\in\mathcal{I}$ and, therefore, $\lim_{M\rightarrow +\infty}\frac{c^{*}_{\Omega, A, Mq, f}}{M}=0$ in the case where $h=0$.
\hfill$\Box$
\begin{rem}[About $N=4$]\label{n=4}
The result of Theorem \ref{easyergodic} holds true also in the case where $N=4$. That is, when $\Omega=\R^{4}$ or $\Omega=\R\times \omega$ with $\omega\subseteq \R^{3}$ satisfying (\ref{comega}). When $h>0$, we simply do the same as above in the proof of Theorem \ref{easyergodic}. In the case where $h=0$, that is the cylindrical ergodic components $V_{1}$ and $V_{2}$ are in the same direction $e_{1}=(1,0,0,0)$ and tangent to each other, we need to be a bit more careful. We know that any  $w\in \mathcal{I}$, will be equal to a constant $\lambda$ over $V_{1}$ and to a constant $\mu$ over $V_{2}$. Here, $N=4$, we also claim  that the condition $w\in H^{1}_{loc}(\Omega)$ \emph{together with} $h=0$ will lead to $\lambda =\mu$ and then the analogous quantity in (\ref{leading}) will be $0$. In fact, if $\lambda \neq \mu$, then as $w\in H^{1}_{loc}(\R^{4})$ (or $H^{1}_{loc}(\Omega)$), then for almost every $x_1\in \R$, the function $w(x_1,\cdot,\cdot,\cdot)$ must be in $H^{1}_{loc}(\R^{3})$. In particular $w(0,\cdot,\cdot,\cdot)\in H^{1}(K)$ where we chose $K\subset\R^{3}$ to contain parts of $D_{1}$ as well as parts of $D_{2}$. However, over the bounded set $K$, $w(0,\cdot,\cdot,\cdot)$ takes two different values $\lambda$ and $\mu$. Applying Theorem \ref{H1-and-dim} (part 1), we get a contradiction. Therefore, $\lambda=\mu$ and the function $w$ must have the same constant value over $V_{1}$ and $V_{2}$. This finishes the proof in the 4-dimensional case. 
\end{rem}
\subsubsection*{Proof of Theorem \ref{ngeq5}}  
The proof of Thoerem \ref{ngeq5} is similar to that of Theorem \ref{easyergodic}. However, we do not need to assume that $h>0$, here, as the dimension is $N\geq 5$. Indeed,  we need to construct an $H^{1}_{loc}(\R^{N})$ first integral of $q$  which takes a constant value $\lambda$ on $V_{1}:=\R\times D_{1}$ and a different constant value $\mu$ on $V_{2}:=\R\times D_{2}$. This construction is easy when the cylindrical components $V_{1}$ and $V_{2}$ are parallel and not tangent to each other, i.e. when $h>0$. So, we just look at the case where $h=0$. The regularity we need this first integral, call it $u_{0}$, to have is $H^{1}_{loc}(\R^{N})$. One can see that there is the problem of  square-integrability of the $N^{th}$ partial derivative of $u_{0}$ on any compact which contains a part of $V_{1}$ and another part of $V_{2}$. However, this can be resolved by observing that a slice (in the $x_{1}=0$ plane for instance) of a cylindrical domain as $V_{1}=\R\times D_{2}\subseteq \R\times \R^{N-1}$ or $V_{2}$ is an $(N-1)$-dimensional ball (with $N-1\geq 4$). Hence, we can apply part 2 of Theorem \ref{H1-and-dim} and consider the function $u\in H^{1}_{loc}(\R^{N-1})$ which satisfies the desired properties on the slices $D_{1}\subset \R^{N-1}$ and $D_{2}\subset \R^{N-1}$ of $V_{1}$ and $V_{2}$ respectively. That is, $u=\lambda$ on $D_{1}$ and $u=\mu$ on $D_{2}$ with $\lambda\neq \mu$ and $u\in H^{1}_{loc}(\R^{N-1})$. \\
We now define $u_{0}$,  in $N$-variables,  by $u_{0}=\lambda$ on $V_{1}\subset\R^{N}$ and $u_{0}=\mu$ on $V_{2}\subset \R^{N}$, with $\lambda\neq \mu$. Obviously, the function $u_{0}\in H^{1}_{loc}(\R^{N})$  and hence a first integral of $q$ even though $h=0.$ Now, we redo the same computations (\ref{leading}) with the function $u_{0}\in H^{1}_{loc}(\R^{N})$  instead of $w_{0}$ and get 
$$\int_{C}(q\cdot e) u_{0}^{2}=(\lambda^{2}-\mu^{2})\int_{C}v(x)\cdot e \neq 0\text{, as }\lambda\neq \mu.$$ This leads us to the conclusion that
$$\ds{\lim_{M\rightarrow+\infty}\frac{c^{*}_{\Omega,A, Mq,f}(e)}{M}=\max_{w\in\mathcal{I}}\frac{\ds\int_C(q\cdot\tilde{e})\,w^2}{\ds\int_C w^2}}>0,$$ independently of whether $h$ is positive or zero, whenever $N\geq 5$,  and finishes the proof of Theorem \ref{ngeq5}.
\hfill $\Box$

\vskip 0.3 cm
\noindent We end this section by giving a short proof of Theorem \ref{shearthm} for the sake of completeness.

\subsubsection*{Proof of Theorem \ref{shearthm}}
 We know from \cite{ek1} and \cite{zlatosARMA} (this is Theorem \ref{ek1} which we reviewed above) that 
$$\lim_{M\rightarrow+\infty}\ds{\frac{\ds{c^{*}_{\Omega,A,M\,q,f}(e)}}{M}}=\ds\max_{\ds{w\in \mathcal{I}_1^A}}\frac{\ds\int_C(q\cdot\tilde{e})\,w^2}{\ds\int_C w^2}= \max_{\ds{w\in \mathcal{I}}}\frac{\ds\int_C(q\cdot\tilde{e})\,w^2}{\ds\int_C w^2},$$ where $C$ is the periodicity cell of $\Omega$. Notice that, in this particular case, $C=[0,L_{1}]\times C_{\omega}$ where $L_{1}$ is the $x_{1}$ period of the diffusion $A$ and the reaction $f$ in equation (\ref{heteqM}) and $C_{\omega}$ is the periodicity cell of the section $\omega$ of $\Omega$ (in the case where $d\geq 1$) and $C_{\omega}=\omega$ in the case $d=0$.  The reason of being able to write the limit as a  maximum over the set $\mathcal{I}$ was given in Remark \ref{observation} above. In this present situation, the vector field $q$ is uni-directional. That is, for any $x\in\Omega$ where $q_{1}(x)\neq 0$, the streamline $T_{x}$ passing through $x$ is parallel to $e.$ As $q_{1}\not\equiv 0$ in $\omega$ and $\int_{C_{\omega}}q_{1}=(\int_{C}q_{1})/L_{1}=0$, there exists $z\in\omega $ and an open neighborhood  $U\subseteq \omega$ of $z$  such that $q_{1}(z)> 0$ and $q_{1}>0$ on $U$. Let $\alpha=\alpha(x_{2},\cdots,x_{N})\in C^{\infty}_{c}(U)$ be a compactly supported smooth function such that $\alpha(z)>0$, $
\alpha \geq 0$, $\alpha\not \equiv 0$ in $U$. Thus, $\int_{U}q_{1}\alpha^{2}>0$. We may, without any loss, normalize $\alpha$ so that $\int_{C}\alpha^{2}dx =1$.

 Moreover, as $q=(q_{1}(x_{2},\cdots,x_{N}),0,
 \cdots,0)$, then every first integral $w$ of $q$ is independent of $x_{1}$ and has the form $w=\beta(x_{2},\cdots,x_{N})$. Therefore, $\mathcal {I}=
 \mathcal{J}$ in the case of shear flows. Hence, the function $\alpha \in \mathcal{I}$ and the following holds $$\max_{\ds{w\in \mathcal{I}}}\frac{\ds\int_C(q\cdot\tilde{e})\,w^2}{\ds\int_C w^2}\geq \int_{C} q_{1}\alpha^{2 }dx=L_{1}\int_{C_{\omega}}q_{1}\alpha^{2}dx_{2}\cdots dx_{N}>0,$$
  which completes the proof of Theorem \ref{shearthm}.\hfill $\Box$

\appendix
\section{Incompressible flows carry no ergodic components in 2D}\label{No-ergodicityin2d}
 \emph{In the two-dimensional case, there are no incompressible flows with ergodic components in the sense of Definition \ref{ergodic-comp}}. 
 
 Indeed, if $q \in C^{1,\delta}(\overline{\Omega})$ is a vector field verifying (\ref{cq}) and (\ref{cq2}), with $\Omega\subseteq\R^{2}$, there exists $\phi \in C^{2,\delta}(\overline{\Omega})$ such that $q=\nabla^{\perp} \phi$. The function $\phi$ is then a \emph{first integral of} $q$ and thus is constant a.e on any ergodic component by Theorem \ref{main}.
Assume, to the contrary, that there exists  $E$ an ergodic component of $q$ and $\lambda \in \R$ such that $\phi(x)=\lambda$ a.e. on $E$. We assume moreover that $q$ does not vanish on $E$, since any stationary point of $q$ is already stable by the flow. We finally set $\tilde{E}=\{ x \in E, \ \phi(x)=\lambda \text{ and } q(x)\neq0\}$.\\
$\tilde{E}$ is clearly an ergodic component of $q$, has same Lebesgue measure as $E$, and since $\tilde{E}$ does not contain any stationary point of $q$, we have
$$
\tilde{E} \subset \partial V_\lambda, \ \text{ where } V_\lambda=\{ x \in \Omega, \ \phi(x)<\lambda\}.
$$
Since the outward unit normal of $V_{\lambda}$ is $\textbf{n}=\dfrac{\nabla\phi}{|\nabla\phi|}$ whenever it is defined, it then follows, from Stokes theorem, that
\begin{equation}\label{levelset}
\int_{\tilde{E}}|q| \leq \oint_{\partial V_{\lambda}} |q|= \oint_{\partial V_{\lambda}} \nabla \phi \cdot \textbf{n} = \iint_{V_{\lambda}} \Delta \phi <+\infty.
\end{equation}
In fact, (\ref{levelset}) is also true for each of the subsets $$\tilde{E}_{k}=\left\{x\in E,~\phi(x)=\lambda\text{ and }|q(x)|\geq\frac{1}{k}\right\},~k\in\mathbb{N}$$
of $\tilde{E}.$
That is, 
$$\forall k\in\mathbb{N}, ~\frac{1}{k}\mathcal{L}^{1}(\tilde{E}_{k})\leq\iint_{V_{\lambda}}\Delta \phi<\infty.$$
Hence, $\mathcal{L}^2(\tilde{E})=0$ because $\tilde{E}$ is $\sigma$-finite for the 1-dimensional Lebesgue measure ($\ds{\tilde{E}=\cup_{k\in\mathbb{N}}\tilde{E}_{k}}$). This is a contradiction and so an ergodic component can not exist in the two-dimensional case.

\section{The zero-average assumption on incompressible flows}
We claimed  in Remark \ref{remarkonzeroaverage} above that the normalization  (\ref{cq2}) of the incompressible vector field is equivalent to having all the $N$ (not only $d$) components of the advection field $q$. This will be the goal of the following proposition.
\noindent As the coefficients and the domain of the reaction-advection-diffusion which we consider have a periodic structure, it will be sometimes convenient to use the following notations.
\begin{defin}Having a domain $\Omega\subseteq\R^{d}\times\R^{N-d}$ with a periodic nature given by (\ref{comega}), we denote the set of equivalence classes modulo the periods $L_{1},\cdots, L_{d}$ by 
\begin{equation}\label{omegahat}
\ds \hat{\Omega}:= \Omega\Big/{L_{1}\mathbb{Z}\times\cdots\times L_{d}}\mathbb{Z}\times\{0\}^{N-d}=\Omega\Big/L_{1}e_{1}\oplus\cdots\oplus L_{d}e_{d},
\end{equation}
where $\{e_{1},\cdots, e_{d},\cdots,e_{N}\}$ is the standard basis of $\R^{N}=\mathbb{R}^{d}\times\R^{N-d}.$
We also set 
\begin{equation}
\ds T:= \R^{N}\Big/{L_{1}\mathbb{Z}\times\cdots\times L_{d}}\mathbb{Z}\times\{0\}^{N-d}=\R^{N}\Big/L_{1}e_{1}\oplus\cdots\oplus L_{d}e_{d} \text{ .}
\end{equation}
Each $x\in \Omega$ will then have an equivalence class $\hat x\in \hat\Omega$ and for each $L$-periodic function $h:\Omega\rightarrow\R^{m}$ ($m\in\mathbb{N}$) we can define the function $\hat h:\hat\Omega\rightarrow\R^{m}$ defined by $$\forall x\in \Omega, \hat h(\hat x):=h(x).$$
\end{defin}

\begin{proposition}\label{zeroaverageproperty}
Consider an incompressible field $q:\Omega\rightarrow \mathbb{R}^{N}$ satisfying (\ref{cq}) and (\ref{cq2}). Then, 
$$\forall \, 1\leq i\leq N, ~~\int_{C}q_{i}(x)dx=0$$ or equivalently $\ds {\int_{C}q(x)dx=0},$ where $C$ is the periodicity cell of $\Omega$ (see (\ref{periodicityCell}) above.)
\end{proposition}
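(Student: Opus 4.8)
The plan is to use incompressibility to write each of the last $N-d$ components of $q$ as a pure divergence, and then to apply the divergence theorem on the bounded cell $C$, killing the resulting boundary term with the no-flux condition $q\cdot\nu=0$ on $\partial\Omega$ together with periodicity. The first observation is that for $1\le i\le d$ the identity $\int_C q_i\,dx=0$ is nothing but the standing assumption (\ref{cq21}); hence the entire content of the proposition is that the components $q_i$ with $d<i\le N$ \emph{automatically} have zero average. Crucially, these are exactly the indices for which $\Omega$ is bounded in the direction $e_i$, and this boundedness is what makes the argument go through. (When $d=N$ there is nothing to prove beyond (\ref{cq21}).)

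Fix $i$ with $d<i\le N$. Since $\nabla\cdot q=0$ and $\nabla x_i=e_i$, one has the pointwise identity
\[
q_i\;=\;\nabla x_i\cdot q\;=\;\nabla\cdot(x_i q)-x_i\,\nabla\cdot q\;=\;\nabla\cdot(x_i q)\qquad\text{on }\overline\Omega,
\]
and $x_i q\in C^1(\overline\Omega)$ because $q\in C^{1,\delta}(\overline\Omega)$. The cell $C$ is bounded (it is contained in $[0,L_1]\times\cdots\times[0,L_d]\times\{|y|\le R\}$) with piecewise-smooth boundary, so the divergence theorem yields
\[
\int_C q_i\,dx\;=\;\int_C\nabla\cdot(x_i q)\,dx\;=\;\int_{\partial C}x_i\,(q\cdot\nu)\,d\sigma.
\]

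The main step is to show this boundary integral vanishes, by decomposing $\partial C$ into three types of pieces. First, the part of $\partial C$ contained in $\partial\Omega$ contributes nothing, since $q\cdot\nu=0$ there by (\ref{cq}). Second, for each $j\in\{1,\dots,d\}$ there are two lateral faces, contained in the hyperplanes $\{x_j=0\}$ and $\{x_j=L_j\}$, with outward normals $-e_j$ and $+e_j$; translation by $L_je_j$ is a measure-preserving bijection between them (because $\Omega=\Omega+L_je_j$), under which $q_j$ is invariant by $L$-periodicity and $x_i$ is invariant because $i\neq j$ (recall $i>d\ge j$), so the integrand $x_i(q\cdot\nu)=\pm x_iq_j$ takes opposite values at corresponding points and the two face integrals cancel. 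Third, there are no remaining faces: because $\Omega$ is bounded in every direction $e_k$ with $k>d$, the cell $C$ is not truncated by any hyperplane transverse to such a direction, so all other boundary points of $C$ already lie on $\partial\Omega$ and were accounted for in the first piece, while the lower-dimensional edges where pieces meet have $\sigma$-measure zero. Hence $\int_C q_i\,dx=0$ for every $d<i\le N$, which combined with (\ref{cq21}) gives $\int_C q_i\,dx=0$ for all $1\le i\le N$, i.e. $\int_C q\,dx=0$.

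I expect the only genuinely delicate point to be the bookkeeping of the boundary decomposition --- verifying that $\partial C$ splits exactly as described and that the face pairing in the second piece is an honest measure-preserving involution. This can be made cleaner by passing to the quotient $\hat\Omega=\Omega/(L_1\mathbb{Z}\times\cdots\times L_d\mathbb{Z}\times\{0\}^{N-d})$ introduced above: for $i>d$ the coordinate $x_i$ descends to a well-defined $C^1$ function on $\hat\Omega$, the field $q$ descends to a divergence-free field $\hat q$ tangent to $\partial\hat\Omega$, the lateral faces get glued away so that $\partial\hat\Omega$ is simply the image of $\partial\Omega$, and then $\int_C q_i=\int_{\hat\Omega}\nabla\cdot(x_i\hat q)=\int_{\partial\hat\Omega}x_i\,(\hat q\cdot\nu)\,d\sigma=0$ in a single application of the divergence theorem. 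For $i\le d$ the function $x_i$ is multivalued on $\hat\Omega$, which is precisely why (\ref{cq21}) is needed and cannot be weakened.
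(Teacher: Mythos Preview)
Your proof is correct and follows essentially the same route as the paper: the key identity $q_i=\nabla\cdot(x_i q)$ for $i>d$, followed by integration by parts on the quotient $\hat\Omega$ (or equivalently on $C$ with the periodic face cancellations), is exactly what the authors do---they merely package the integration-by-parts step as a preliminary lemma involving a first integral $w$ and then specialize to $w\equiv 1$. Your direct presentation and your closing remark about passing to $\hat\Omega$ match the paper's argument almost verbatim.
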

\noindent The proof of  Proposition \ref{zeroaverageproperty} is an application of the following preliminary lemma.
\begin{lem}\label{keylemma}
Let $w\in \mathcal {I}$ be a first integral of $q$. Then, for all $\phi\in H^{1}_{per, loc}(\Omega)\equiv H^{1}(\hat \Omega),$ we have 
\begin{equation}\label{key1}
\int_{C}wq\cdot\nabla\phi ~dx=0=\int_{\ds \hat\Omega}\hat w \hat q\cdot\nabla\hat\phi.
\end{equation}

\end{lem}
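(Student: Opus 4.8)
The plan is to recognise the integrand as a pure divergence, using incompressibility on one factor and the first‑integral property on the other, and then apply the divergence theorem on the periodicity cell $C$; the lateral faces of $C$ cancel in pairs by $L$‑periodicity, and the part of $\partial C$ sitting inside $\partial\Omega$ contributes nothing because $q\cdot\nu=0$ there. First I would reduce to a smooth test function. Since $C$ is bounded and $w$ is $L$‑periodic and in $H^1_{loc}(\Omega)$, we have $w\in H^1(C)\subseteq L^2(C)$; hence, if $(\phi_n)$ is a sequence of smooth $L$‑periodic functions with $\phi_n\to\phi$ in $H^1(\hat\Omega)\equiv H^1_{per,loc}(\Omega)$, then $q\cdot\nabla\phi_n\to q\cdot\nabla\phi$ in $L^2(C)$ (recall $q\in C^{1,\delta}(\overline\Omega)$ is bounded), so $\int_C w\,q\cdot\nabla\phi_n\to\int_C w\,q\cdot\nabla\phi$. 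Thus it suffices to prove (\ref{key1}) when $\phi$ is smooth and $L$‑periodic.

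For such $\phi$, set $V:=\phi\,q$. Then $V\in C^{1}(\overline\Omega)$ is $L$‑periodic, and by the incompressibility $\nabla\cdot q=0$ of (\ref{cq}) one has $\nabla\cdot V=\phi\,\nabla\cdot q+q\cdot\nabla\phi=q\cdot\nabla\phi$, while $V\cdot\nu=\phi\,(q\cdot\nu)=0$ on $\partial\Omega$. Applying Green's formula on the Lipschitz domain $C$ to $w\in H^1(C)$ against the $C^1$ field $V$,
\begin{equation*}
\int_C w\,(q\cdot\nabla\phi)\,dx=\int_C w\,\nabla\cdot V\,dx=-\int_C \nabla w\cdot V\,dx+\int_{\partial C} w\,(V\cdot\nu)\,d\sigma .
\end{equation*}
In the boundary term, the portion of $\partial C$ lying on $\partial\Omega$ drops out since $V\cdot\nu=0$ there; the remaining portion consists of the opposite face pairs $\{x_i=0\}$ and $\{x_i=L_i\}$, $1\le i\le d$, on which $w\,(V\cdot\nu)$ takes equal values (by $L$‑periodicity of $w$ and $V$) against opposite outward normals, hence cancels in pairs. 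Therefore $\int_C w\,(q\cdot\nabla\phi)\,dx=-\int_C\phi\,(q\cdot\nabla w)\,dx=0$, because $w$ is a first integral of $q$, i.e. $q\cdot\nabla w=0$ a.e.\ in $\Omega$ (Definition \ref{first-integral}). The second equality in (\ref{key1}) is the same computation written intrinsically on the quotient manifold $\hat\Omega$, where $\int_{\hat\Omega}=\int_C$: there $\hat w\hat\phi\,\hat q$ is a $W^{1,1}$ field tangent to $\partial\hat\Omega$ (as $\hat q\cdot\nu=0$), so $\int_{\hat\Omega}\nabla\cdot(\hat w\hat\phi\,\hat q)=0$ by Stokes' theorem with no leftover boundary term, and expanding the divergence (using $\nabla\cdot\hat q=0$ and $\hat q\cdot\nabla\hat w=0$) leaves exactly $\int_{\hat\Omega}\hat w\,\hat q\cdot\nabla\hat\phi=0$.

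The only delicate point is the bookkeeping: legitimising the integration by parts at $H^1_{loc}$ regularity and organising $\partial C$ into its $\partial\Omega$‑part and its periodic‑face part. This is precisely why I mollify $\phi$ first, so that $V=\phi q$ is genuinely $C^1$ and bounded and the classical ``$H^1$ against $C^1$'' Green formula on a Lipschitz domain applies verbatim; the cancellation of the lateral faces is then elementary, and working on $\hat\Omega$ makes even that cancellation automatic.
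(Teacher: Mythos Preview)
Your proof is correct and follows essentially the same route as the paper's: write $wq\cdot\nabla\phi=w\,\nabla\cdot(\phi q)$ using incompressibility, integrate by parts, kill the boundary terms via $L$-periodicity together with $q\cdot\nu=0$ on $\partial\Omega$, and conclude from $q\cdot\nabla w=0$. The paper works directly on $\hat\Omega$ and is terser about the regularity and boundary bookkeeping that you spell out explicitly with the mollification of $\phi$ and the face-by-face cancellation on $\partial C$.
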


\begin{proof} Since $\nabla \cdot q\equiv 0$ in $\Omega,$ it follows that for
all $\hat \phi\in H^{1}(\hat\Omega)$ and for almost every $\hat x\in \hat \Omega$  $$\nabla\cdot(\hat q(\hx) \hat\phi(\hx))=\hq(\hx)\cdot\nabla\hat{\phi}(\hx).$$ Due to the periodicity of $\phi,$ $w,~q$ and $\Omega$ and the condition $q\cdot\nu=0$ on $\partial\Omega$, the boundary terms vanish in the following integrals (integrating by parts):
$$\begin{array}{lll}
\ds \int_{C}wq\cdot\nabla\phi ~dx &:=\ds \int_{\ds \hat\Omega}\hat w \hat q\cdot\nabla\hat\phi ~d\hx&=\ds \int_{\ds\hat\Omega}\hat w \nabla\cdot(\hat q\hat\phi )~d\hx\vspace{5 pt}\\
&&=-\ds \int_{\ds\hat\Omega} \nabla \hat w \cdot \hat q\hat\phi \vspace{5 pt} \\
&&=0\text{ (since $w\in \mathcal{I}$.)}
\end{array}$$
\end{proof}

\noindent \textbf{Proof of Proposition \ref{zeroaverageproperty}}.  We already have $\int_{C}q_{i}=0$ for all $i\leq d.$ Fix any $i$ such that $N\geq i>d$ and observe that $q_{i}=q\cdot e_{i},$ where $e_{i}$ is the ith member of the canonical basis. On the other hand, we have $e_{i}= \nabla_{x} \,x_{i}$ and, as $i>d,$ $\phi(x)=x_{i}$ is $(L_{1},\cdots ,L_{d})$-periodic in $(x_{1},\cdots,x_{d})$ over $\Omega.$ Thus, $\phi(x)=x_{i}$ is admissible as a test function in (\ref{key1}). We apply Lemma \ref{keylemma} with $\phi(x)=x_{i}$ and $w\equiv1$ to get
$$\int_{C}q\cdot e_{i}=0.$$
This proves that $\ds{\int_{C}q_{i}(x)dx =0}$ for all $i>d$ and completes the proof of Proposition \ref{zeroaverageproperty}.\hfill $\Box$

In the precise limit which describes the asymptotic behavior of the KPP minimal speed within large advection, \cite{ek1,ekcras,zlatosARMA} show that the quantity $\int_{C}q\cdot e w^{2}$ where $e$ is a fixed unitary direction in $\R^{d}\times\{0\}^{N-d}$ and $w\in\mathcal I$ plays the important role. To analyze this variational quantity over the family of first integrals $\mathcal{I}$ in dimensions $N\geq 3$, the next lemma could  be seen as a preliminary tool to work with ``$N-1$ dimensional slices'' of the cell $C\subseteq\Omega$.
\begin{lem}[quantities over a slice of the domain] \label{over-a-slice-lemma}
Let $w\in\mathcal I$ ($w$ is a first integral of $q$) where $q$ satisfies (\ref{cq}) and (\ref{cq2})  and the domain $\Omega\subseteq \R^{N}$ satisfies (\ref{comega}). Fix any  $1\leq i \leq d$ and denote by
\begin{equation}\label{slice}
\begin{array}{lll}
\ds{C_{i, 0}}&:=&\ds{C\cap \ds{\left\{x_{i}=0\right\}}}\\
&=&\left\{(x_{1},\cdots,x_{d},x_{d+1},\cdots,x_{N})\in C \subset \Omega\text{ such that } x_{i}=0\right\}.
\end{array}
\end{equation}
Then, 
\begin{equation}\label{integral-over-slice}
\int_{C}(q\cdot e_{i})w^{2} dx=L_{i}\ds{\int_{\ds{C_{i,0}}}(q\cdot e_{i}) w^{2}}.
\end{equation}
\end{lem}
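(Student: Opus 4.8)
The plan is to reduce the $N$-dimensional integral $\int_C (q\cdot e_i)w^2\,dx$ to the integral over the slice $C_{i,0}$ by using the flow of $q$ to transport the slice, together with the fact that $w$ is a first integral (constant along streamlines) and that $q$ is incompressible (the flow is measure-preserving). The key object is the restriction of the flow $\Phi$ to a short time interval in a direction transverse to the slice $\{x_i = 0\}$; concretely, I would foliate the cell $C$ by the level sets of the coordinate $x_i$ and relate integration over $C$ to integration over $C_{i,0}$ via an appropriate flow map.

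First I would observe that, by periodicity of $q$, $w$ and $\Omega$ in the $x_i$-direction and the normalization, it suffices to work modulo the lattice; so I pass to $\hat\Omega$ as in the paper. Next, the cleanest route: consider the function $g(x) := (q\cdot e_i)(x)\,w(x)^2$, which is $L$-periodic and satisfies $\nabla\cdot\big(g\,?\big)$-type identities poorly, so instead I would argue via the divergence structure. Note that $q\cdot e_i = q\cdot\nabla x_i$ and, since $w$ is a first integral, $q\cdot\nabla(w^2) = 2w\,(q\cdot\nabla w) = 0$ a.e.; hence $(q\cdot e_i)\,w^2 = w^2\,(q\cdot\nabla x_i) = q\cdot\nabla(x_i w^2) - x_i\,q\cdot\nabla(w^2) = \nabla\cdot\big(x_i w^2 q\big)$ in the distributional sense, using $\nabla\cdot q = 0$. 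Then integrating over the cell $C$ and applying the divergence theorem, all the boundary contributions on $\partial\Omega$ vanish because $q\cdot\nu = 0$, and on the periodic faces $x_j = 0$ and $x_j = L_j$ for $j\neq i$ the integrands cancel by $L$-periodicity of $w^2 q$. The only surviving boundary terms come from the two faces $x_i = 0$ and $x_i = L_i$: on the former $x_i = 0$ kills the term, and on the latter $x_i = L_i$ gives $L_i\int_{C_{i,L_i}} (q\cdot e_i)\,w^2$, which by periodicity equals $L_i\int_{C_{i,0}} (q\cdot e_i)\,w^2$. Wait—this shows $\int_C(q\cdot e_i)w^2 = L_i\int_{C_{i,0}}(q\cdot e_i)w^2$ only if the left side equals that boundary term; let me instead set it up as: $\int_C \nabla\cdot(x_i w^2 q)\,dx$ equals the net flux, which is $L_i\int_{C_{i,0}}(q\cdot e_i)w^2 - 0$, and the left side is $\int_C (q\cdot e_i)w^2\,dx$ by the identity above. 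That gives the claim directly.

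The main obstacle is regularity: $w$ is only $H^1_{loc}$, so $w^2\in W^{1,1}_{loc}$ and $x_i w^2 q \in W^{1,1}_{loc}$, which is just enough to justify the integration by parts / divergence theorem on the Lipschitz cell $C$, but one must be careful that the distributional identity $(q\cdot\nabla x_i)w^2 = \nabla\cdot(x_i w^2 q)$ holds in $L^1$ — this follows from $\nabla\cdot q = 0$ and $q\cdot\nabla(w^2)=0$ a.e., the latter being exactly the first-integral property multiplied by $2w\in L^2_{loc}$, with $q\cdot\nabla w\in L^2_{loc}$ vanishing a.e. A density argument (approximating $w$ by smooth $L$-periodic functions, as already used in the proof of Lemma~\ref{ergodic-lemma}) handles the passage to the limit, provided one keeps the first-integral constraint in the limit; alternatively one invokes Lemma~\ref{keylemma} with the test function $\phi(x) = x_i$ (admissible for $i\le d$ since it is $L$-periodic in the relevant directions up to the additive period, handled by the same trick as in the proof of Proposition~\ref{zeroaverageproperty}) after splitting off the non-periodic part. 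I would therefore present the argument in the smooth case first, then close by density, and point to the handling of the affine test function $x_i$ exactly as in the proof of Proposition~\ref{zeroaverageproperty}.
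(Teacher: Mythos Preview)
Your proposal is correct and follows essentially the same route as the paper: write $(q\cdot e_i)\,w^2 = \nabla\cdot(x_i\,w^2\,q)$ using $\nabla\cdot q=0$ and $q\cdot\nabla(w^2)=0$, apply the divergence theorem over $C$, and collect the surviving boundary terms on the faces $\{x_i=0\}$ and $\{x_i=L_i\}$. The paper presents this as starting from $0=\int_C \phi\,q\cdot\nabla(w^2)$ with the non-periodic test function $\phi(x)=x_i$ and integrating by parts, which is the same computation; your treatment is in fact slightly more explicit about the cancellation on the remaining periodic faces $\{x_j=0\}$ and $\{x_j=L_j\}$ for $j\neq i$, which the paper absorbs into its decomposition of $\partial C$ without comment.
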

\begin{rem} The result in the above lemma can be stated also for integrals of the form $\int_{C}(q\cdot e_{i})w^{m}dx$ for all $m\geq 1$--as long as $w^{m}\in{H^{1}_{loc}}(\Omega)$ and $q\cdot \nabla w=0$ a.e. in $\Omega$. We will demonstrate Lemma \ref{over-a-slice-lemma} only in the case $m=2$. 
\end{rem}

\begin{proof} The proof of this lemma is similar to that of Lemma \ref{keylemma} above. The only difference here is that we consider here a function $\phi\in H^{1}_{loc}(\Omega)$ instead of $H^{1}(\ho).$ This means that the test functions $\phi$ in the present proof are not necessarily periodic, and hence, we may have non-zero boundary terms upon integrating by parts.  Indeed, since $w$ is a first integral of $q,$ we then have
$\phi q\cdot \nabla (w^{2})=0$ for all $\phi\in H^{1}_{loc}(\Omega)$ and for almost every $x\in \Omega\supseteq C$. This, together with $\nabla \cdot q\equiv0$ in $\Omega$, yields that
\begin{equation}\label{computation}
\begin{array}{rll}
0=&\ds{\int_{C}\phi q\cdot \nabla (w^{2})}\vspace{3 pt}\\
=& \ds{\int_{C}\nabla\cdot(\phi qw^{2})-\int_{C}\nabla\phi\cdot qw^{2}} \vspace{3 pt}\\
=&\ds{\int_{\partial C}\phi w^{2}q\cdot \mathbf{n} d\sigma-\int_{C}\nabla \phi\cdot qw^{2}},
\end{array}
\end{equation}
where $\mathbf{n}$ stands for the outward unit normal vector to the cell $C$ and $\sigma$ is the Lebesgue measure induced  over $\partial C$.  
Since $i\leq d$,  $\Omega$ is $L_{i}$-periodic in the ith direction and thus we can rewrite $\partial C$ in the following format
\begin{center} $\partial C=(C\cap\partial \Omega)\cup C_{i,0}\cup C_{i,L_i},$\end{center}
where 
$$\forall \kappa\in [0,L_{i}], ~C_{i,\kappa}:=\{x\in C \text{ such that } x_{i}=\kappa\}.$$ 
Notice that, in this setting, we have $\mathbf{n}\equiv \nu$ on $C\cap \partial \Omega$ where we can use the assumption $q\cdot\nu\equiv 0$; while,  $\mathbf{n}= -e_{i}$ on $C_{i,0}$ and  $\mathbf{n}= e_{i}$ on $C_{i,L_i}$.  
As a consequence, we can rewrite the last computation as 
\begin{equation}\label{essential}
\begin{array}{rll}
0=&\ds{\int_{\partial C}\phi w^{2}q\cdot \mathbf{n} -\int_{C}\nabla \phi\cdot qw^{2}}\vspace{4 pt}\\
=&\ds{\int_{C\cap\partial\Omega}\phi w^{2}\ds\underbrace{q\cdot \mathbf{n}}_{=0} \,-\int_{C_{i,0}}\phi w^{2}q\cdot e_{i}+\int_{C_{i,L_i}}\phi w^{2}q\cdot e_{i}}\vspace{4pt}\\
&\ds{-\int_{C}\nabla \phi\cdot qw^{2}}\vspace{4pt}\\
=&\ds{-\int_{C_{i,0}}\phi w^{2}q\cdot e_{i}+\int_{C_{i,L_{i}}}\phi w^{2}q\cdot e_{i}}-\ds{\int_{C}\nabla \phi\cdot qw^{2}}
\end{array}
\end{equation}

In this step, we pick the test function $\phi(x)=x_{i}$ for all $x\in \Omega.$ Notice that $\phi$ is \emph{not} $L_{i}$-periodic in $x_{i}$, while $\nabla \phi=e_{i}$, $\phi|_{C_{i,L_{i}}}=L_{i}$ and $\phi|_{C_{i,0}}=\ds{x_{i}|_{\{x_{i}=0\}}}=0$. With this particular $\phi$, $\ds{\int_{C}\nabla \phi\cdot qw^{2}}$ can be written as 
$\ds{\int_{C}e_{i}\cdot q w^{2}}.$ We can also simplify the other  terms of the right hand side of (\ref{essential}) as follows 
$$\begin{array}{ll}
\ds{-\int_{C_{i,0}}\phi w^{2}q\cdot e_{i}+\ds\int_{{C_{i,L_i}}}\phi w^{2}q\cdot e_{i}}
&=\ds{-\int_{C_{i,a}} 0\times (w^{2}q\cdot e_{i})+L_{i}\int_{C_{i,L_i}} w^{2}q\cdot e_{i}}\vspace{4 pt}\\
&=\ds{L_{i}\int_{C_{i,0}} w^{2}q\cdot e_{i}}\text{ (as $q$ is $L_{i}$-periodic.)}
\end{array}$$ 
Eventually, we get from (\ref{essential}) and the last simplification
\begin{equation}\label{integral-over-slice-0}
0=L_{i}\int_{C_{i,0}} w^{2}q\cdot e_{i}-\int_{C}e_{i}\cdot qw^{2}.
\end{equation}
and this completes the proof of Lemma \ref{over-a-slice-lemma}.
\end{proof}

\section*{Acknowledgements}
The authors wish to thank professors  Mary Pugh, Robert Jerrard, Borris Khesin and 
Konstantin Khanin, at the University of Toronto, for the enlightening discussions during this research project. M.E. is grateful to NSERC-Canada for providing support under the NSERC postdoctoral fellowship 6790-403487.


\begin{thebibliography}{}
\bibitem{Aronsonwein1}
D.~G. Aronson and H.~F. Weinberger.
\newblock Nonlinear diffusion in population genetics, combustion, and nerve
  pulse propagation.
\newblock In {\em Partial differential equations and related topics ({P}rogram,
  {T}ulane {U}niv., {N}ew {O}rleans, {L}a., 1974)}, pages 5--49. Lecture Notes
  in Math., Vol. 446. Springer, Berlin, 1975.

\bibitem{Aronsonwein2}
D.~G. Aronson and H.~F. Weinberger.
\newblock Multidimensional nonlinear diffusion arising in population genetics.
\newblock {\em Adv. in Math.}, 30(1):33--76, 1978.

\bibitem{berestycki}
H.~Berestycki.
\newblock The influence of advection on the propagation of fronts in
  reaction-diffusion equations.
\newblock {\em Nonlinear PDEs in Condensed Matter and Reactive Flows, H.
  Berestycki and Y. Pomeau eds., Kluwer, Dordrecht}, NATO Science Series C,
  569, 2003.


\bibitem{BHNadvec}
H.~Berestycki, F.~Hamel, and N.~Nadirashvili.
\newblock Elliptic eigenvalue problems with large drift and applications to
  nonlinear propagation phenomena.
\newblock {\em Comm. Math. Phys.}, 253(2):451--480, 2005.

\bibitem{BHNper}
H.~Berestycki, F.~Hamel, and N.~Nadirashvili.
\newblock The speed of propagation for {KPP} type problems. {I}. {P}eriodic
  framework.
\newblock {\em J. Eur. Math. Soc. (JEMS)}, 7(2):173--213, 2005.

\bibitem{constantinekiselevRyzhik}
P.~Constantin, A.~Kiselev, and L.~Ryzhik.
\newblock Quenching of flames by fluid advection.
\newblock {\em Comm. Pure Appl. Math.}, 54(11):1320--1342, 2001.

\bibitem{ZlatosConstantineKiselevRyzhik}
P.~Constantin, A.~Kiselev, L.~Ryzhik, and A.~Zlato\v{s}.
\newblock Diffusion and mixing in a fluid flow.
\newblock {\em Annals of Mathematics}, 68:643--674, 2008.

\bibitem{ekcras}
M.~El~Smaily and S.~Kirsch.
\newblock Asymptotics of the {KPP} minimal speed within large drift.
\newblock {\em C. R. Math. Acad. Sci. Paris}, 348(15-16):857--861, 2010.

\bibitem{ek1}
M.~El~Smaily and S.~Kirsch.
\newblock The speed of propagation for {KPP} reaction-diffusion equations
  within large drift.
\newblock {\em Adv. Differential Equations}, 16(3-4):361--400, 2011.

\bibitem{heinze}
S.~Heinze.
\newblock Large convection limits for kpp fronts.
\newblock {\em Max-Planck-Institut f\"ur Mathematik}, Preprint 21/2005:1--18,
  March 2005.

\bibitem{hpt2004}
H.~Hu, Y.~Pesin, and A.~Talitskaya.
\newblock Every compact manifold carries a hyperbolic {B}ernoulli flow.
\newblock In {\em Modern dynamical systems and applications}, pages 347--358.
  Cambridge Univ. Press, Cambridge, 2004.

\bibitem{katok}
A.~Katok.
\newblock Bernoulli diffeomorphisms on surfaces.
\newblock {\em Ann. of Math. (2)}, 110(3):529--547, 1979.

\bibitem{ryzhikKiselev}
A.~Kiselev and L.~Ryzhik.
\newblock Enhancement of the traveling front speeds in reaction-diffusion
  equations with advection.
\newblock {\em Ann. de l'Inst. Henri Poincar\'e, C. Analyse non lin\'eaire},
  18:309--358, 2001.

\bibitem{KPP}
A.~Kolmogorov, I.~Petrovsky, and N.~Piskunov.
\newblock Etude de l'\'equation de la diffusion avec croissance de la
  quantit\'e de mati\`ere et son application \`a un probl\`eme biologique.
\newblock {\em Bulletin Universit\'e d'Etat \`{a} Moscou (Bjul. Moskowskogo
  Gos. Univ.)}, S\'erie internationale \textbf{A1}:1--26, 1937.

\bibitem{DoeringThifaultLin}
Z.~Lin, J.-L. Thiffeault, and C.~R. Doering.
\newblock Optimal stirring strategies for passive scalar mixing.
\newblock {\em Journal of Fluid Mechanics}, 675:465--476, 2011.


\bibitem{BertozziMajda}
A.~J. Majda and A.~L. Bertozzi.
\newblock {\em Vorticity and incompressible flow}, volume~27 of {\em Cambridge
  Texts in Applied Mathematics}.
\newblock Cambridge University Press, Cambridge, 2002.

\bibitem{MS94}
A.~J. Majda and P.~E. Souganidis.
\newblock Large-scale front dynamics for turbulent reaction-diffusion equations
  with separated velocity scales.
\newblock {\em Nonlinearity}, 7(1):1--30, 1994.

\bibitem{NovikovRyzhik}
A.~Novikov and L.~Ryzhik.
\newblock Boundary layers and {KPP} fronts in a cellular flow.
\newblock {\em Arch. Ration. Mech. Anal.}, 184(1):23--48, 2007.

\bibitem{ZlatosRyzhik}
L.~Ryzhik and A.~Zlato{\v{s}}.
\newblock K{PP} pulsating front speed-up by flows.
\newblock {\em Commun. Math. Sci.}, 5(3):575--593, 2007.

\bibitem{thiffeaultdoering}
J.-L. Thiffeault and C.~R. Doering.
\newblock The mixing efficiency of open flows.
\newblock {\em Physica D}, 240(2):180--186, 2011.

\bibitem{weinberger2002}
H.~F. Weinberger.
\newblock On spreading speeds and traveling waves for growth and migration
  models in a periodic habitat.
\newblock {\em J. Math. Biol.}, 45(6):511--548, 2002.

\bibitem{Xinreview}
J.~Xin.
\newblock Front propagation in heterogeneous media.
\newblock {\em SIAM Rev.}, 42(2):161--230, 2000.

\bibitem{zlatosARMA}
A.~Zlato{\v{s}}.
\newblock Sharp asymptotics for {KPP} pulsating front speed-up and diffusion
  enhancement by flows.
\newblock {\em Arch. Ration. Mech. Anal.}, 195(2):441--453, 2010.

\end{thebibliography}
\end{document}